\newtheorem{theorem}{Theorem}[section]
\newtheorem{lemma}[theorem]{Lemma}
\newtheorem{corollary}[theorem]{Corollary}
\theoremstyle{definition}
\newtheorem{definition}[theorem]{Definition}
\newtheorem{example}[theorem]{Example}
\theoremstyle{remark}
\newtheorem{remark}[theorem]{Remark}
\newtheorem{chunk}[theorem]{}
\newcommand{\inv}{\mathrm{inv}}
\newcommand{\m}{\mathfrak{m}}
\title[The skew Eliahou-Kervaire resolution]{The Eliahou-Kervaire resolution over a skew polynomial ring}
\author[L.~Ferraro]{Luigi Ferraro}
\address{School of Mathematical and Statistical Sciences,
University of Texas Rio Grande Valley, Edinburg, TX 78539, U.S.A.}
\email{luigi.ferraro@utrgv.edu}
\author[A.~Hardesty]{Alexis Hardesty}
\address{Division of Mathematics,
Texas Woman's University, Denton, TX 76204, U.S.A.}
\email{ahardesty1@twu.edu}
\keywords{skew polynomial rings, monomial ideals, stable ideals, minimal free resolutions, Eliahou-Kervaire resolution}
\subjclass[2010]{16E05, 16E45}
\begin{document}

\begin{abstract}
In a 1987 paper, Eliahou and Kervaire constructed a minimal resolution of a class of monomial ideals in a polynomial ring, called stable ideals. As a consequence of their construction they deduced several homological properties of stable ideals. Furthermore they showed that this resolution admits an associative, graded commutative product that satisfies the Leibniz rule. In this paper we show that their construction can be extended to stable ideals in skew polynomial rings. As a consequence we show that the homological properties of stable ideals proved by Eliahou and Kervaire hold also for stable ideals in skew polynomial rings.
\end{abstract}

\maketitle

\section{Introduction}
Let $k$ be a field and let $R = k[x_1,\dots,x_n]$ be a standard graded polynomial ring over $k$. Given an ideal $I\subseteq R$, a central object of study in commutative algebra is a free resolution of $I$, given by a long exact sequence 
\[
\begin{tikzcd}[ampersand replacement = \&]
\dots \rar{d_{i+1}} 
\& R^{n_{i}} \rar{d_{i}}
\& \dots \rar
\& R^{n_{1}} \rar{d_{1}}
\& R
\end{tikzcd}
\]
such that $\mathrm{Im}\;d_1=I$.
A free resolution is called \textit{minimal} provided that $d_i(R^{n_i})\subseteq(x_1,\ldots,x_n)R^{n_{i-1}}$ for all $i\geq1$ (with $n_0=1$), which implies that the rank of the free modules is the least possible.

Given an ideal $I$ of $Q$, we say that $I$ is a monomial ideal provided $I$ has a generating set of monomials. A free resolution for monomial ideals was constructed by Taylor in \cite{TaylorDiana}. Unfortunately the Taylor resolution is rarely minimal, one can take as an example the Taylor resolution of the ideal $(x_1^2,x_1x_2,x_2^2)$ in the ring $k[x_1,x_2]$. For certain classes of monomial ideals a minimal resolution has been constructed, 
for example for stable ideals \cite{EK}. Minimal free resolutions have proven to be a rich source of homological information about monomial ideals, naturally leading one to question in which cases one can give an explicit construction of a minimal free resolution of monomial ideals. 

A class of rings of interest in noncommutative algebraic geometry is the class of skew polynomial rings $k_\mathfrak{q}[x_1,\ldots,x_n]$, where the variables commute up to a nonzero scalar $x_ix_j=q_{i,j}x_jx_i$ for all $i,j=1,\ldots,n$, with $q_{i,i}=1$ for all $i=1,\ldots,n$ and $q_{i,j}=q_{j,i}^{-1}$ for all $i,j=1,\ldots,n$. Similar questions to the ones asked in the commutative case arise in this context. Namely, can we construct a free resolution of monomial ideals? In which cases can we construct a minimal free resolution of a monomial ideal? The first question has been answered in \cite{Taylor} where the first author, with D. Martin and F. Moore generalize the construction of the Taylor resolution to monomial ideals in skew polynomial rings. This paper is an attempt to answer the second question, namely we construct a minimal free resolution for a class of monomial ideals.

In \cite{EK}, Eliahou and Kervaire construct a minimal free resolution of a certain class of monomial ideals called stable ideals, which are monomial ideals satisfying a certain ``exchange property''. In this paper we extend the notion of stable ideal to monomial ideals in skew polynomial rings and we construct a minimal free resolution akin to the one provided by Eliahou and Kervaire. From this resolution we show that the homological properties of stable ideals in commutative polynomial rings generalize to stable ideals in skew polynomial rings.

It has been shown in commutative algebra that having a minimal resolution with a product structure leads to interesting results. In \cite{EK}, Eliahou and Kervaire show that their resolution admits such a structure, although it is nonunital. A unital product was first defined by Srinivasan, for powers of the maximal irrelevant ideal, see \cite{srinivasan1989algebra}, and later by Peeva, for stable ideals, see \cite{Peeva}. In this paper we further generalize Eliahou and Kervaire's results by showing that the resolution that we construct admits a (noncommutative) nonunital product.


The paper is organized as follows. In Section 2 we extend the definition of stable ideal to skew polynomial rings and recall some results from \cite{EK}. Moreover we define two bicharacters whose properties will be used in our computations.
In Section 3, we give an explicit construction of the skew Eliahou-Kervaire resolution and prove that this resolution is minimal. Section 4 is dedicated to applications. Finally, in Section 5, we extend to our noncommutative context the product on this resolution provided by Eliahou and Kervaire in \cite{EK}.

\section{Setup and Background}
Throughout the paper $k$ will denote a commutative Noetherian ring and $R$ will denote a skew polynomial ring with coefficients in $k$. More precisely $R$ is an iterated Ore extension of $k$, $R=k_\mathfrak{q}[x_1,\ldots,x_n]$, where $x_ix_j=q_{i,j}x_jx_i$ with $q_{i,j}\in k^*$, $q_{j,i}=q_{i,j}^{-1}$ for all $i,j$ and $q_{i,i}=1$ for all $i$, and such that $k$ is central in $R$.

Let $X$ be the set of monomials in $R$, then $\mathbf{x}^\mathbf{a}$ denotes the monomial $x_1^{a_1}\cdots x_n^{a_n}$. Let $\mathbf{a}=(a_1,\ldots,a_n),\mathbf{b}=(b_1,\ldots,b_n)\in\mathbb{N}^n$. Then we denote by $\frac{\mathbf{x}^\mathbf{a}}{\mathbf{x}^{\mathbf{b}}}$ the element $\mathbf{x}^{\mathbf{a}-\mathbf{b}}$ of the set $X$, provided that $a_i\geq b_i$ for all $i=1,\dots,n$. We denote by $\mathbf{x}^{\mathbf{a}}*\mathbf{x}^\mathbf{b}$ the element $\mathbf{x}^{\mathbf{a}+\mathbf{b}}$ of the set $X$. We point out that the set $X$ with the operation $*$ is a monoid.

The ring $R$ admits a $\mathbb{Z}$-grading by setting $\mathrm{deg}\;x_i=d_i\in\mathbb{Z}^+$ for all $i=1,\ldots,n$. Let $\sigma_1,\ldots,\sigma_n$ be the normalizing automorphisms of $x_1,\ldots, x_n$ in $R$, respectively. Let $G$ be the subgroup of the group of graded ring automorphisms of $R$ generated by $\sigma_1,\ldots,\sigma_n$. It follows that $G$ is an abelian group. The ring $R$ admits a $G$-grading, compatible with the $\mathbb{Z}$-grading, by setting the $G$-degree of $x_1^{a_1}\cdots x_n^{a_n}$ to be $\sigma_1^{a_1}\cdots\sigma_n^{a_n}$.

We define a function $\chi:G\times G\rightarrow k^*$ by extending the assignment $(\sigma_i,\sigma_j)\mapsto q_{i,j}$. It can be proved that $\chi$ is a bicharacter, i.e. $\chi(-,\sigma),\chi(\sigma,-)$ are group homomorphisms for all $\sigma\in G$ and that $\chi$ is alternating, i.e. $\chi(\sigma,\sigma)=1$ for all $\sigma\in G$, see \cite{Taylor}. If $\mathbf{x}^\mathbf{a},\mathbf{x}^\mathbf{b}$ have $G$-degree $\sigma,\tau$ respectively, then we abuse notation and write $\chi(\mathbf{x}^\mathbf{a},\mathbf{x}^\mathbf{b})$ in place of $\chi(\sigma,\tau)$.
We point out that $\mathbf{x}^\mathbf{a}\mathbf{x}^\mathbf{b}=\chi(\mathbf{x}^\mathbf{a},\mathbf{x}^\mathbf{b})\mathbf{x}^\mathbf{b}\mathbf{x}^\mathbf{a}$. This makes $R$ a color commutative ring, see \cite{Taylor}. 
We define a function $C:X\times X\rightarrow k^*$ through the following equality
\[
\mathbf{x}^\mathbf{a}\mathbf{x}^\mathbf{b}=C(\mathbf{x}^\mathbf{a},\mathbf{x}^\mathbf{b})\mathbf{x}^\mathbf{a}*\mathbf{x}^\mathbf{b}.
\]
It can be shown that $C$ is a bicharacter of the monoid $(X,*)$. Moreover, it was shown in \cite{Taylor} that
\begin{equation}\label{eq:CChiRel}
\frac{C(\mathbf{x}^\mathbf{a},\mathbf{x}^\mathbf{b})}{C(\mathbf{x}^\mathbf{b},\mathbf{x}^\mathbf{a})}=\chi(\mathbf{x}^\mathbf{a},\mathbf{x}^\mathbf{b}).
\end{equation}
\begin{remark}
We extend $C$ to the group of monomials in $x_1^{\pm1},\ldots, x_n^{\pm1}$ by
\[
C(\mathbf{x}^{\mathbf{\boldsymbol{a}}-\mathbf{\boldsymbol{b}}},\mathbf{x}^{\mathbf{\boldsymbol{c}}-\mathbf{\boldsymbol{d}}})=C(\mathbf{x}^\mathbf{\boldsymbol{a}},\mathbf{x}^\mathbf{\boldsymbol{c}})C(\mathbf{x}^\mathbf{\boldsymbol{a}},\mathbf{x}^\mathbf{\boldsymbol{d}})^{-1}C(\mathbf{x}^\mathbf{\boldsymbol{b}},\mathbf{x}^\mathbf{\boldsymbol{c}})^{-1}C(\mathbf{x}^\mathbf{\boldsymbol{b}},\mathbf{x}^\mathbf{\boldsymbol{d}}),
\]
where $\mathbf{\boldsymbol{a,b,c,d}}\in\mathbb{N}^n$.
\end{remark}

Let $w = x_{1}^{a_{1}} x_{2}^{a_{2}} \dots x_{n}^{a_{n}}$ be a monomial in $R$, then $\max(w)$ denotes the integer  $\max\{ i \mid a_{i}>0 \}$. Similarly, $\min(w)$ denotes the integer $\min\{ i \mid a_{i}>0 \}$.

Analogous to the commutative case we make the following
\begin{definition}
A monomial ideal $I$ in $R$ is $\textit{stable}$ provided for every monomial $w\in I$ and every index $i<\max(w)$, the monomial $\frac{x_{i}*w}{x_{\max(w)}}\in I$.
\end{definition}

Suppose that $I$ is the ideal in $R$ that is generated by a set $S$ of monomials. We say that $S$ is a $\textit{minimal set of generators}$ of $I$ provided none of the monomials $u\in S$ is a proper multiple of any other $v\in S$. As it happens in the commutative case, for a monomial ideal $I$, a minimal set of generators always exists and it is uniquely determined by $I$. We call the unique minimal set of generators of a monomial ideal the $\textit{canonical generating set}$ and denote it $G(I)$.

\begin{remark}
Let $I$ be a stable ideal with canonical generating set $G(I)$. For every monomial $w\in I$, there is a \textit{canonical decomposition} $w=u*y$ with $u\in G(I)$ and $\max(u)\leq\min(y)$. This is proved in \cite[Lemma 1.1]{EK} in the commutative case. The proof in our noncommutative context works similarly and it is therefore omitted.
\end{remark}

\begin{remark}\label{rem:DecFun}
Let $I$ be a stable ideal in $R$ and let $M(I)$ denote the set of all monomials in $I$. Define the $\textit{decomposition function}$ \[ g:M(I) \longrightarrow G(I)\]
by $g(w)=u$ where $w=u*y$ is the canonical decomposition of $w$. 
For every monomial $v\in R$ and every $w\in M(I)$ one has 
\begin{enumerate}
    \item $g(v*w) = g(w)$ if and only if $\max(g(w)) \leq \min(v)$,
    \item $g(v*g(w)) = g(v*w)$.
\end{enumerate}
These properties are proved in \cite[Lemma 1.2 and Lemma 1.3]{EK} in the commutative case. The proof in our noncommutative context works similarly and it is therefore omitted.

We point out that property (2) implies that if $u,v,w\in M(I)$, then
\[
g(u*g(v*w))=g(u*v*w)=g(g(u*v)*w).
\]
When referencing property (2), we will repeatedly make use of this equality.
\end{remark}

\begin{remark}\label{rem:C=1}
It follows easily from the definitions of $C$ and $g$ that for every monomial $m\in R$ one has
\[
C\left(g(m),\frac{m}{g(m)}\right)=1.
\]

\end{remark}

\section{The skew Eliahou-Kervaire Resolution}

Let $I$ be a stable ideal in $R$. In this section we construct a free resolution of $I$ which is minimal if $k$ is assumed to be a field.

A symbol $e(i_1,\ldots,i_q;u)$ is \textit{admissible} provided $u\in G(I)$ and $(i_1,\ldots,i_q)\in\mathbb{Z}^n$ is a strictly increasing sequence such that $1\leq i_1 < \ldots < i_q < \max(u)$. If $u\in G(I)$, then we denote by $y_r$ the monomial such that $x_{i_r}*u=u_r*y_r$ with $u_r=g(x_{i_r}*u)$. If $\sigma=(i_1,\ldots, i_q)$, and $\sigma_r=(i_1,\ldots,\hat{i}_r,\ldots,i_q)$, then we define $A(\sigma;u)$ to be the set of $r\in\{1,\ldots,q\}$ such that $e(\sigma_r;u_r)$ is an admissible symbol. Let $\sigma_{r_{s}}$ denote the sequence $\sigma_{r}$ with the $s^{th}$ term removed and $i_{r_{s}}$ denote the $s^{th}$ term of $\sigma_{r}$, for any $1\leq r\leq q$ and $1\leq s\leq q-1$. Moreover, if $\sigma=(i_1,\ldots,i_q)$ we denote the monomial $x_{i_1}\cdots x_{i_q}$ by $x_\sigma$. 

All $R$-modules in this paper are considered to be right $R$-modules with a compatible $\mathbb{Z}\times G$-grading. We point out that any right $R$-module $M$ admits a bimodule structure as follows
\[
r\cdot m=\chi(r,m)m\cdot r,\quad\mathrm{for\;all}\;r\in R,m\in M,
\]
where we abuse notation and write $\chi(r,m)$ instead of $\chi(\sigma,\tau)$ where $\sigma,\tau$ are the $G$-degrees of $r,m$ respectively.

\begin{chunk}
Let $I$ be a stable ideal in $R$. We denote by $L_q(I)$ the free module on the set of all admissible symbols $e(i_{1},\dots,i_{q};u)$ for some fixed $q\geq 0$. Let $d:L_q\rightarrow L_{q-1}$ be the map defined as 
\begin{align*} 
    d(e(\sigma;u)) := \sum_{r=1}^{q} (-1)^{r} e(\sigma_{r};u) C(x_{\sigma_{r}}*u,x_{i_{r}})^{-1} x_{i_{r}} - \sum_{r\in A(\sigma;u)} (-1)^{r} e(\sigma_{r};u_{r}) C(x_{\sigma_{r}},y_{r})^{-1} y_{r}.
\end{align*}
\end{chunk}
The rest of this section is dedicated to proving that $(L_\bullet(I),d)$ is a resolution of $I$. We start by proving the following

\begin{lemma}\label{lem:ref1}
If $r\leq s$, we have the following relations:
\begin{align}\label{lem:ref1a}
    \sigma_{r_{s}} &= \sigma_{(s+1)_{r}}\nonumber,\\
    x_{i_{r_{s}}} &= x_{i_{s+1}},\\
    x_{i_{(s+1)_{r}}} &= x_{i_{r}}.\nonumber
\end{align}
Moreover, if $r>s$, we have the following relations:
\begin{align}\label{lem:ref1b}
    \sigma_{r_{s}} &= \sigma_{s_{(r-1)}},\nonumber\\
    x_{i_{r_{s}}} &= x_{i_{s}},\\
    x_{i_{s_{(r-1)}}} &= x_{i_{r}}.\nonumber
\end{align}
\end{lemma}

\begin{theorem}
Let $I$ be a stable ideal in $R$. Then $(L_\bullet(I),d)$ as described above is a $\mathbb{Z}\times G$-graded complex of free $R$-modules.
\end{theorem}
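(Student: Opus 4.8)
The plan is to prove the three assertions in the statement in turn: that each $L_q(I)$ is free, that $d$ is homogeneous for a suitable $\mathbb{Z}\times G$-grading, and that $d^2=0$. Freeness is immediate, since by construction $L_q(I)$ is the free right $R$-module on the set of admissible symbols in homological degree $q$. For the grading I would assign to $e(i_1,\dots,i_q;u)$ the $\mathbb{Z}\times G$-degree of the monomial $x_{i_1}*\cdots*x_{i_q}*u$; this is well defined because both the $\mathbb{Z}$- and $G$-degree of a monomial depend only on its exponent vector. To check that $d$ preserves this degree I would examine the two sums separately. In the first sum the generator $e(\sigma_r;u)$ is multiplied on the right by $x_{i_r}$, which restores the deleted factor and so recovers the degree of $e(\sigma;u)$. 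In the second sum I would use the defining relation $x_{i_r}*u=u_r*y_r$: since $u_r*y_r$ and $x_{i_r}*u$ have the same exponent vector, the monomial $x_{\sigma_r}*u_r*y_r$ has the same exponent vector as $x_\sigma*u$, so $e(\sigma_r;u_r)\cdot y_r$ again carries the degree of $e(\sigma;u)$. Hence $d$ is homogeneous of degree $0$.

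The substance of the theorem is the identity $d^2=0$, which I would establish by a term-by-term cancellation refining the argument of Eliahou and Kervaire. Writing $d=d_1+d_2$ for the two sums in the definition of $d$, I would first treat the purely Koszul-type composite $d_1^2$. For a fixed pair of deleted indices the resulting generator is reached along two paths carrying opposite simplicial signs; the two scalar coefficients differ only by the factor produced when the two deleted variables are commuted past one another, and expanding the relevant values of $C$ by multiplicativity of $C$ in its first argument shows that this discrepancy is exactly accounted for. Thus the two contributions cancel and $d_1^2=0$.

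It then remains to prove $d_1d_2+d_2d_1+d_2^2=0$, and this is where the decomposition function $g$ and the properties recorded in \cref{rem:DecFun} enter. The identity $g(v*g(w))=g(v*w)$ is exactly what guarantees that the generator produced by applying $d_2$ and then $d_1$ (or $d_2$ again) agrees with the one produced in the reverse order, while the criterion $g(v*w)=g(w)\iff\max(g(w))\leq\min(v)$ controls which of these mixed terms are admissible and hence present; together they let me organize the mixed contributions into cancelling pairs indexed by the two deleted data. Once the generators are matched, I would verify that the scalar coefficients of each such pair are negatives of one another. The main obstacle, and the genuinely new feature beyond the commutative case (where all the $C$-factors equal $1$), is precisely this scalar bookkeeping: one must track the products of $C(-,-)^{-1}$ factors together with the scalars picked up from reordering $x_{i_r}$ and $y_r$ and from re-decomposing $x_{i_r}*u$ as $u_r*y_r$. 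The tools for this are the multiplicativity of $C$ in each argument, the normalization $C\!\left(g(m),\tfrac{m}{g(m)}\right)=1$ from \cref{rem:C=1}, and the relation $\tfrac{C(\mathbf{x}^\mathbf{a},\mathbf{x}^\mathbf{b})}{C(\mathbf{x}^\mathbf{b},\mathbf{x}^\mathbf{a})}=\chi(\mathbf{x}^\mathbf{a},\mathbf{x}^\mathbf{b})$ of \cref{eq:CChiRel}, the last being needed whenever a commutation of variables is converted into a ratio of $C$-values. I expect the bulk of the work to be this verification of scalar identities across the three mixed sums.
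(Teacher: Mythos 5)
Your treatment of freeness, of the $\mathbb{Z}\times G$-grading, and of $d_1^2=0$ is sound, and you have correctly identified the scalar tools (bicharacter property of $C$, \cref{eq:CChiRel}, \cref{rem:C=1}). The genuine gap is in your cancellation scheme for the terms involving $d_2$. On $L_\bullet(I)$ the second sum in $d$ runs only over $r\in A(\sigma;u)$, so your proposed pairing --- $d_1d_2$-terms against $d_2d_1$-terms ``indexed by the two deleted data,'' and $d_2^2$-terms among themselves --- breaks exactly when an intermediate symbol $e(\sigma_r;u_r)$ is non-admissible while the final symbol is admissible: one member of the pair is then simply absent from $d$, and \cref{rem:DecFun}(1) does not repair this, since admissibility of the final symbol does not force admissibility of the intermediate one. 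Concretely, take $I=(x_1,x_2,x_3)^2$, $u=x_2x_3$, $\sigma=(1,2)$, and all $q_{i,j}=1$. Then $u_1=g(x_1*u)=x_1x_2$ and $\sigma_1=(2)$, so $e(\sigma_1;u_1)=e(2;x_1x_2)$ is not admissible and $1\notin A(\sigma;u)$. Computing $d^2(e(\sigma;u))$, the basis element $e(\emptyset;x_1x_2)$ receives exactly two contributions,
\[
+\,e(\emptyset;x_1x_2)\,x_3x_2
\qquad\text{and}\qquad
-\,e(\emptyset;x_1x_2)\,x_2x_3,
\]
the first along a $d_2d_1$ path through $e(1;x_2x_3)$, the second along a $d_2^2$ path through $e(1;x_2^2)$. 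These cancel against \emph{each other}, i.e.\ across different composites; under your pairing each of them would instead be matched with a path through the non-admissible symbol $e(2;x_1x_2)$, which does not exist in $L$. Your argument provides no mechanism for such cross-composite cancellation of orphaned terms.

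This is precisely the difficulty the paper's proof is engineered to avoid. It introduces the larger complex $K_\bullet(I)$ on all symbols (dropping the condition $i_q<\max(u)$), where the second sum of the differential $D$ is unrestricted; there the naive pairing works and gives $D^2=0$ (this part matches your computations). It then shows that the span $J_\bullet$ of the non-admissible symbols is a $D$-subcomplex: when $i_q\ge\max(u)$ one has $u_q=u$ and $y_q=x_{i_q}$, so the two $q$-th summands of $D$ cancel, and every remaining summand is again non-admissible because $\max(u_r)\le\max(u)\le i_q$ by \cref{rem:DecFun}. Finally $L_\bullet=K_\bullet/J_\bullet$ carries the induced differential, which is exactly the given $d$, whence $d^2=0$. (In the example above, $D(e(2;x_1x_2))=0$, which is why the two orphans must cancel each other once that symbol is deleted.) If you insist on working directly on $L$, you must prove the statement equivalent to $D(J_\bullet)\subseteq J_\bullet$ --- that the total contribution passing through each non-admissible intermediate symbol vanishes --- at which point you have reconstructed the paper's argument in disguise; passing through $K_\bullet$ from the start is the cleaner route.
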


\begin{proof}
We show that $(L_{\bullet}(I),d)$ is a quotient complex of the complex $(K_{\bullet}(I),D)$ of free $R$-modules $K_{q}$ on the set of all symbols $e(i_1,\dots,i_q;u)$ satisfying $u\in G(I)$ and $1\leq i_1 \leq \dots \leq i_q \leq n$ (dropping the condition that $i_q<\max(u)$) with differential
\begin{align*} 
    D(e(\sigma;u)) := \sum_{r=1}^{q} (-1)^{r} e(\sigma_{r};u) C(x_{\sigma_{r}}*u,x_{i_{r}})^{-1} x_{i_{r}} - \sum_{r=1}^{q} (-1)^{r} e(\sigma_{r};u_{r}) C(x_{\sigma_{r}},y_{r})^{-1} y_{r}.
\end{align*}
To show that $(K_{\bullet}(I),D)$ is a complex, we show that $D^{2}=0$. To do that, we set
\begin{align*}
    D_{1}(e(\sigma;u)) &:= \sum_{r=1}^{q} (-1)^{r} e(\sigma_{r};u) C(x_{\sigma_{r}}*u,x_{i_{r}})^{-1} x_{i_{r}},\\
    D_{2}(e(\sigma;u)) &:= \sum_{r=1}^{q} (-1)^{r} e(\sigma_{r};u_{r}) C(x_{\sigma_{r}},y_{r})^{-1} y_{r}.
\end{align*}
It suffices to show that $D_{1}^{2}$=0, $D_{1}D_{2} + D_{2}D_{1}$=0, and $D_{2}^{2}$=0. We start with showing $D_{1}^{2}$=0. We can write $D_{1}^{2}(e(\sigma;u))$ as
\begin{align*}
    \sum_{r=1}^{q} \left( \sum_{s=1}^{q-1} (-1)^{s} e(\sigma_{r_{s}};u) C(x_{\sigma_{r_{s}}}*u,x_{i_{r_{s}}})^{-1} x_{i_{r_{s}}} \right) (-1)^{r} C(x_{\sigma_{r}}*u,x_{i_{r}})^{-1} x_{i_{r}}.
\end{align*}
By \eqref{lem:ref1a}, if $r\leq s$, the basis elements $e(\sigma_{r_{s}};u)$ and $e(\sigma_{(s+1)_{r}};u)$ are equal. We claim that in this case, their scalar coefficients sum to 0. Therefore a symbol of the form $e(\sigma_{r_s};u)$ with $r\leq s$ will cancel with a symbol of the form $e(\sigma_{r_s};u)$ with $r>s$ and vice versa. In the following computations we will be using the first part of \Cref{lem:ref1}. The element $e(\sigma_{r_{s}};u)$ occurs in $D_{1}^{2}(e(\sigma;u))$ with scalar coefficient
\begin{align*}
    &(-1)^{r+s} 
    C(x_{\sigma_{r_{s}}}*u, x_{i_{r_{s}}})^{-1} 
    C(x_{\sigma_{r}}*u, x_{i_{r}})^{-1} 
    C(x_{i_{r_{s}}},x_{i_{r}})\\
    = &(-1)^{r+s} 
    C(x_{\sigma_{r_{s}}}, x_{i_{r_{s}}})^{-1} 
    C(u, x_{i_{r_{s}}})^{-1} 
    C(x_{\sigma_{r}}, x_{i_{r}})^{-1}
    C(u, x_{i_{r}})^{-1}
    C(x_{i_{r_{s}}},x_{i_{r}}).
\end{align*}
The element $e(\sigma_{(s+1)_{r}};u)$ occurs in $D_{1}^{2}(e(\sigma;u))$ with scalar coefficient
\begin{align*}
    &(-1)^{r+s+1} 
    C(x_{\sigma_{(s+1)_{r}}}*u, x_{i_{(s+1)_{r}}})^{-1} 
    C(x_{\sigma_{s+1}}*u, x_{i_{s+1}})^{-1} 
    C(x_{i_{(s+1)_{r}}},x_{i_{s+1}})\\
    = &(-1)^{r+s+1} 
    C(x_{\sigma_{(s+1)_{r}}}, x_{i_{(s+1)_{r}}})^{-1} 
    C(u, x_{i_{(s+1)_{r}}})^{-1} 
    C(x_{\sigma_{s+1}}, x_{i_{s+1}})^{-1} 
    C(u, x_{i_{s+1}})^{-1} 
    C(x_{i_{(s+1)_{r}}},x_{i_{s+1}})\\
    = &(-1)^{r+s+1} 
    C(x_{\sigma_{r_{s}}}, x_{i_{r}})^{-1} 
    C(u, x_{i_{r}})^{-1} 
    C(x_{\sigma_{s+1}}, x_{i_{s+1}})^{-1} 
    C(u, x_{i_{r_{s}}})^{-1} 
    C(x_{i_{(s+1)_{r}}},x_{i_{s+1}}).
\end{align*}
Clearly the signs will be opposite. We notice that the following labeled factors in the first coefficient
\begin{align*}
    C(x_{\sigma_{r_{s}}}, x_{i_{r_{s}}})^{-1} 
    \underbrace{C(u, x_{i_{r_{s}}})^{-1}}_{1}
    C(x_{\sigma_{r}}, x_{i_{r}})^{-1}
    \underbrace{C(u, x_{i_{r}})^{-1}}_{2}
    C(x_{i_{r_{s}}},x_{i_{r}})
\end{align*}
correspond to the following labeled factors in the second coefficient
\begin{align*}
    C(x_{\sigma_{r_{s}}}, x_{i_{r}})^{-1} 
    \underbrace{C(u, x_{i_{r}})^{-1} }_{2}
    C(x_{\sigma_{s+1}}, x_{i_{s+1}})^{-1} 
    \underbrace{C(u, x_{i_{r_{s}}})^{-1} }_{1}
    C(x_{i_{(s+1)_{r}}},x_{i_{s+1}}).
\end{align*}
Thus it remains to show that 
\begin{align*}
    C(x_{\sigma_{r_{s}}}, x_{i_{r_{s}}})^{-1} 
    C(x_{\sigma_{r}}, x_{i_{r}})^{-1}
    C(x_{i_{r_{s}}},x_{i_{r}})
    =
    C(x_{\sigma_{r_{s}}}, x_{i_{r}})^{-1}
    C(x_{\sigma_{s+1}}, x_{i_{s+1}})^{-1} 
    C(x_{i_{(s+1)_{r}}},x_{i_{s+1}}).
\end{align*}
In the computations below, we show that the following labeled factors are equal:
\begin{align*}
    \underbrace{C(x_{\sigma_{r_{s}}}, x_{i_{r_{s}}})^{-1} }_{4}
    \underbrace{C(x_{\sigma_{r}}, x_{i_{r}})^{-1}
    C(x_{i_{r_{s}}},x_{i_{r}})}_{3}
    =
    \underbrace{C(x_{\sigma_{r_{s}}}, x_{i_{r}})^{-1}}_{3}
    \underbrace{C(x_{\sigma_{s+1}}, x_{i_{s+1}})^{-1} 
    C(x_{i_{(s+1)_{r}}},x_{i_{s+1}})}_{4}.
\end{align*}
Indeed, for the factors labeled with 3 one has:
\begin{align*}
    1 &= C \left( 1, x_{i_{r}} \right)\\
    &= C \left( \frac{x_{\sigma_{r}}}{x_{\sigma_{r}}}, x_{i_{r}} \right)\\
    &= C \left( \frac{x_{\sigma_{r_{s}}} * x_{i_{r_{s}}}}{x_{\sigma_{r}}}, x_{i_{r}} \right)\\
    &= C( x_{\sigma_{r_{s}}}, x_{i_{r}})
    C(x_{i_{r_{s}}}, x_{i_{r}})
    C(x_{\sigma_{r}},x_{i_{r}})^{-1},
\end{align*}
while for the factors labeled with 4 one has: 
\begin{align*}
    1 &= C \left( 1, x_{i_{s+1}} \right)\\
    &= C \left( \frac{x_{\sigma_{s+1}}}{x_{\sigma_{s+1}}}, x_{i_{s+1}} \right)\\
    &= C \left( \frac{x_{\sigma_{(s+1)_{r}}} * x_{i_{(s+1)_{r}}}}{x_{\sigma_{s+1}}}, x_{i_{s+1}} \right)\\
    &= C(x_{\sigma_{(s+1)_{r}}}, x_{i_{s+1}}) C(x_{i_{(s+1)_{r}}}, x_{i_{s+1}}) C(x_{\sigma_{s+1}},x_{i_{s+1}})^{-1}\\
    &= C(x_{\sigma_{r_{s}}}, x_{i_{r_{s}}}) C(x_{i_{(s+1)_{r}}}, x_{i_{s+1}}) C(x_{\sigma_{s+1}},x_{i_{s+1}})^{-1}.
\end{align*}
\noindent Now we show that $D_{1}D_{2} + D_{2}D_{1}$=0. Notice that
\begin{align*}
    D_{1}D_{2}(e(\sigma;u)) = \sum_{r=1}^{q} \left( \sum_{s=1}^{q-1} (-1)^{s} e(\sigma_{r_{s}};u_{r}) C(x_{\sigma_{r_{s}}}*u_{r},x_{i_{r_{s}}})^{-1} x_{i_{r_{s}}} \right) (-1)^{r} C(x_{\sigma_{r}},y_{r})^{-1} y_{r},
\end{align*}
\begin{align*}
    D_{2}D_{1}(e(\sigma;u)) = \sum_{s=1}^{q} \left( \sum_{r=1}^{q-1} (-1)^{r} e(\sigma_{s_{r}};u_{s_{r}}) C(x_{\sigma_{s_{r}}},y_{s_{r}})^{-1} y_{s_{r}} \right) (-1)^{s} C(x_{\sigma_{s}}*u,x_{i_{s}})^{-1} x_{i_{s}},
\end{align*}
where $\sigma_{r_{s}}$ and $x_{i_{r_{s}}}$ are as above, while $y_{s_{r}}$ is defined by the canonical decomposition $x_{i_{s_{r}}} * u = u_{s_{r}} * y_{s_{r}}$ where $u_{s_{r}}=g(x_{i_{s_{r}}} * u)$, for any $1\leq r\leq q-1$ and $1\leq s \leq q$. If $r\leq s$, the last two relations in \eqref{lem:ref1a} imply that
\begin{align*}
    u_{r_{s}} &= g(x_{i_{r_{s}}} * u) = g(x_{i_{s+1}} * u) = u_{s+1},\\
    u_{(s+1)_{r}} &= g(x_{i_{(s+1)_{r}}} * u) = g(x_{i_{r}} * u) = u_{r},\\
    y_{r_{s}} &= \frac{x_{i_{r_{s}}} * u}{u_{r_{s}}} = \frac{x_{i_{s+1}} * u}{u_{s+1}} = y_{s+1},\\
    y_{(s+1)_{r}} &= \frac{x_{i_{(s+1)_{r}}} * u}{u_{(s+1)_{r}}} = \frac{x_{i_{r}} * u}{u_{r}} = y_{r}.
\end{align*}
By \eqref{lem:ref1a} and the above display, if $r\leq s$, the basis elements $e(\sigma_{r_{s}};u_{r})$ and $e(\sigma_{(s+1)_{r}};u_{(s+1)_{r}})$ are equal.
We claim that in this case, their scalar coefficients sum to 0. Therefore a symbol of the form $e(\sigma_{r_s};u_r)$ with $r\leq s$ will cancel with a symbol of the form $e(\sigma_{r_s};u_r)$ with $r>s$ and vice versa. The element $e(\sigma_{r_{s}};u_{r})$ occurs in $D_{1}D_{2}(e(\sigma;u))$ with scalar coefficient
\begin{align*}
    &(-1)^{r+s} 
    C(x_{\sigma_{r_{s}}}*u_{r}, x_{i_{r_{s}}})^{-1} 
    C(x_{\sigma_{r}}, y_{r})^{-1} 
    C(x_{i_{r_{s}}},y_{r})\\
    = &(-1)^{r+s} 
    C(x_{\sigma_{r_{s}}}, x_{i_{r_{s}}})^{-1} 
    C(u_{r}, x_{i_{r_{s}}})^{-1} 
    C(x_{\sigma_{r}}, y_{r})^{-1} 
    C(x_{i_{r_{s}}},y_{r}).
\end{align*}
The element $e(\sigma_{(s+1)_{r}};u_{(s+1)_{r}})$ occurs in $D_{2}D_1(e(\sigma;u))$ with scalar coefficient
\begin{align*}
    &(-1)^{r+s+1} 
    C(x_{\sigma_{(s+1)_{r}}}, y_{(s+1)_{r}})^{-1} 
    C(x_{\sigma_{s+1}}*u, x_{i_{s+1}})^{-1} 
    C(y_{(s+1)_{r}},x_{i_{s+1}})\\
    = &(-1)^{r+s+1} 
    C(x_{\sigma_{(s+1)_{r}}}, y_{(s+1)_{r}})^{-1} 
    C(x_{\sigma_{s+1}}, x_{i_{s+1}})^{-1} 
    C(u, x_{i_{s+1}})^{-1} 
    C(y_{(s+1)_{r}},x_{i_{s+1}})\\
    = &(-1)^{r+s+1} 
    C(x_{\sigma_{r_{s}}}, y_{r})^{-1} 
    C(x_{\sigma_{s+1}}, x_{i_{s+1}})^{-1} 
    C(u, x_{i_{s+1}})^{-1} 
    C(y_{(s+1)_{r}},x_{i_{s+1}}).
\end{align*}
Clearly the signs will be opposite, so it remains to show that 
\begin{align*}
    &C(x_{\sigma_{r_{s}}}, x_{i_{r_{s}}})^{-1} 
    C(u_{r}, x_{i_{r_{s}}})^{-1} 
    C(x_{\sigma_{r}}, y_{r})^{-1} 
    C(x_{i_{r_{s}}},y_{r})\\
    = &C(x_{\sigma_{r_{s}}}, y_{r})^{-1} 
    C(x_{\sigma_{s+1}}, x_{i_{s+1}})^{-1} 
    C(u, x_{i_{s+1}})^{-1} 
    C(y_{(s+1)_{r}},x_{i_{s+1}}).
\end{align*}
In the computations below, we show that the following labeled factors are equal:
\begin{align*}
    &\underbrace{C(x_{\sigma_{r_{s}}}, x_{i_{r_{s}}})^{-1} 
    C(u_{r}, x_{i_{r_{s}}})^{-1}}_{2}
    \underbrace{C(x_{\sigma_{r}}, y_{r})^{-1} 
    C(x_{i_{r_{s}}},y_{r})}_{1}\\
    = &\underbrace{C(x_{\sigma_{r_{s}}}, y_{r})^{-1}}_{1}
    \underbrace{C(x_{\sigma_{s+1}}, x_{i_{s+1}})^{-1} 
    C(u, x_{i_{s+1}})^{-1} 
    C(y_{s+1_{r}},x_{i_{s+1}})}_{2}.
\end{align*}
Indeed, for the factors labeled with 1 one has:
\begin{align*}
    1 &= C \left(1 , y_{r} \right)\\
     &= C \left( \frac{x_{\sigma_{r}}}{x_{\sigma_{r}}}, y_{r} \right)\\
    &= C \left( \frac{x_{\sigma_{r_{s}}}*x_{i_{r_{s}}}}{x_{\sigma_{r}}}, y_{r} \right)\\
    &= C ( x_{\sigma_{r_{s}}}, y_{r} )
    C(x_{i_{r_{s}}}, y_{r})
    C(x_{\sigma_{r}},y_{r})^{-1},
\end{align*}
while for the factors labeled with 2 one has:
\begin{align*}
    1 &= C \left( 1, x_{i_{s+1}} \right)\\
    &= C \left(\frac{1}{x_{i_{(s+1)_{r}}}}*
    \frac{x_{i_{(s+1)_{r}}}* u}{u}, x_{i_{s+1}} \right)\\
    &= C \left( \frac{x_{\sigma_{(s+1)_{r}}}}{x_{\sigma_{s+1}}} * \frac{u_{(s+1)_{r}}* y_{(s+1)_{r}}}{u}, x_{i_{s+1}} \right)\\
    &= C \left( \frac{x_{\sigma_{(s+1)_{r}}}}{x_{\sigma_{s+1}}} * \frac{u_{r}* y_{r}}{u}, x_{i_{s+1}} \right)\\
    &= C(x_{\sigma_{(s+1)_{r}}}, x_{i_{s+1}}) C(u_{r}, x_{i_{s+1}}) C(y_{r}, x_{i_{s+1}}) C(x_{\sigma_{s+1}},x_{i_{s+1}})^{-1} C(u,x_{i_{s+1}})^{-1}\\
    &= C(x_{\sigma_{r_{s}}}, x_{i_{r_{s}}}) C(u_{r}, x_{i_{r_{s}}}) C(y_{(s+1)_{r}}, x_{i_{s+1}}) C(x_{\sigma_{s+1}},x_{i_{s+1}})^{-1} C(u,x_{i_{s+1}})^{-1}.
\end{align*}
\noindent Now we consider the case when $r>s$, and use the relations in \eqref{lem:ref1b}. Due to the last two relations listed, we have that 
\begin{align*}
    u_{r_{s}} &= g(x_{i_{r_{s}}} *u) = g(x_{i_{s}} *u) = u_{s},\\
    u_{s_{(r-1)}} &= g(x_{i_{s_{(r-1)}}} *u) = g(x_{i_{r}} *u) = u_{r},\\
    y_{r_{s}} &= \frac{x_{i_{r_{s}}} *u}{u_{r_{s}}} = \frac{x_{i_{s}} *u}{u_s} = y_{s},\\
    y_{s_{(r-1)}} &= \frac{x_{i_{s_{(r-1)}}} *u}{u_{s_{(r-1)}}} = \frac{x_{i_{r}} *u}{u_{r}} = y_{r}.
\end{align*}
By \eqref{lem:ref1b} and the above display, the basis elements $e(\sigma_{r_{s}};u_{r})$ and $e(\sigma_{s_{(r-1)}};u_{s_{(r-1)}})$ are equal. We claim that in this case, their scalar coefficients sum to 0. The element $e(\sigma_{r_{s}};u_{r})$ occurs in $D_{1}D_{2}(e(\sigma;u))$ with scalar coefficient
\begin{align*}
    &(-1)^{r+s} 
    C(x_{\sigma_{r_{s}}}*u_{r}, x_{i_{r_{s}}})^{-1} 
    C(x_{\sigma_{r}}, y_{r})^{-1} 
    C(x_{i_{r_{s}}},y_{r})\\
    = &(-1)^{r+s} 
    C(x_{\sigma_{r_{s}}}, x_{i_{r_{s}}})^{-1}
    C(u_{r}, x_{i_{r_{s}}})^{-1} 
    C(x_{\sigma_{r}}, y_{r})^{-1} 
    C(x_{i_{r_{s}}},y_{r}).
\end{align*}
The element $e(\sigma_{s_{(r-1)}};u_{s_{(r-1)}})$ occurs in $D_{2}D_{1}(e(\sigma;u))$ with scalar coefficient
\begin{align*}
    &(-1)^{r+s-1} 
    C(x_{\sigma_{s_{(r-1)}}}, y_{s_{(r-1)}})^{-1} 
    C(x_{\sigma_{s}}*u, x_{i_{s}})^{-1} 
    C(y_{s_{(r-1)}},x_{i_{s}})\\
    = &(-1)^{r+s-1} 
    C(x_{\sigma_{s_{(r-1)}}}, y_{s_{(r-1)}})^{-1} 
    C(x_{\sigma_{s}}, x_{i_{s}})^{-1}
    C(u, x_{i_{s}})^{-1}
    C(y_{s_{(r-1)}},x_{i_{s}}).
\end{align*}
Clearly the signs will be opposite, so it remains to show that 
\begin{align*}
    &C(x_{\sigma_{r_{s}}}, x_{i_{r_{s}}})^{-1}
    C(u_{r}, x_{i_{r_{s}}})^{-1} 
    C(x_{\sigma_{r}}, y_{r})^{-1} 
    C(x_{i_{r_{s}}},y_{r})\\
    = &C(x_{\sigma_{s_{(r-1)}}}, y_{s_{(r-1)}})^{-1} 
    C(x_{\sigma_{s}}, x_{i_{s}})^{-1}
    C(u, x_{i_{s}})^{-1}
    C(y_{s_{(r-1)}},x_{i_{s}}).
\end{align*}
By computations similar to those in the case $r\leq s$, the following labeled factors are equal:
\begin{align*}
    &\underbrace{C(x_{\sigma_{r_{s}}}, x_{i_{r_{s}}})^{-1}
    C(u_{r}, x_{i_{r_{s}}})^{-1}}_{4}
    \underbrace{C(x_{\sigma_{r}}, y_{r})^{-1} 
    C(x_{i_{r_{s}}},y_{r})}_{3}\\
    = &\underbrace{C(x_{\sigma_{s_{(r-1)}}}, y_{s_{(r-1)}})^{-1}}_{3}
    \underbrace{C(x_{\sigma_{s}}, x_{i_{s}})^{-1}
    C(u, x_{i_{s}})^{-1}
    C(y_{s_{(r-1)}},x_{i_{s}})}_{4}.
\end{align*}

Finally, we show that $D_{2}^{2}$=0. We can write $D_{2}^{2}(e(\sigma;u))$ as
\begin{align*}
    \sum_{r=1}^{q} \left( \sum_{s=1}^{q-1} (-1)^{s} e(\sigma_{r_{s}};u_{r_{s}}) C(x_{\sigma_{r_{s}}},y_{r_{s}})^{-1} y_{r_{s}} \right) (-1)^{r} C(x_{\sigma_{r}},y_{r})^{-1} y_{r}
\end{align*}
where $y_{r_{s}}$ is defined by the canonical decomposition $x_{i_{r_{s}}} * u_{r} = u_{r_{s}}* y_{r_{s}}$ where $u_{r_{s}}=g(x_{i_{r_{s}}} * u_{r})$, for any $1\leq r\leq q$ and $1\leq s \leq q-1$. If $r>s$, the last two relations in \eqref{lem:ref1b} imply that 
\begin{align*}
    u_{r_{s}} &= g(x_{i_{r_{s}}} *u_{r})\\
    &= g( x_{i_{r_{s}}} *g(x_{i_{r}}* u))\\
    &= g( x_{i_{r_{s}}}* x_{i_{r}}* u) \textnormal{ by \Cref{rem:DecFun}(2)}\\
    &= g( x_{i_{s}} *x_{i_{s_{(r-1)}}} *u) \textnormal{ by \Cref{lem:ref1}}\\
    &= g( x_{i_{s_{(r-1)}}} *x_{i_{s}} *u)\\
    &= g( x_{i_{s_{(r-1)}}}* g(x_{i_{s}}* u)) \textnormal{ by \Cref{rem:DecFun}(2)}\\
    &= g( x_{i_{s_{(r-1)}}} *u_{s})\\
    &= u_{s_{(r-1)}}.
\end{align*}
By \eqref{lem:ref1b} and the above display, the basis elements $e(\sigma_{r_{s}};u_{r_{s}})$ and $e(\sigma_{s_{r-1}};u_{s_{r-1}})$ are equal. We claim that in this case, their scalar coefficients sum to 0. The element $e(\sigma_{r_{s}};u)$ occurs in $D_{2}^{2}(e(\sigma;u))$ with scalar coefficient
\begin{align*}
    &(-1)^{r+s} 
    C(x_{\sigma_{r_{s}}}, y_{r_{s}})^{-1} 
    C(x_{\sigma_{r}}, y_{r})^{-1} 
    C(y_{r_{s}},y_{r})\\
    = &(-1)^{r+s} 
    C(x_{\sigma_{r_{s}}}, x_{i_{r_{s}}})^{-1}
    C(x_{\sigma_{r_{s}}}, u_{r})^{-1}
    C(x_{\sigma_{r_{s}}}, u_{r_{s}})
    C(x_{\sigma_{r}}, x_{i_{r}})^{-1}
    C(x_{\sigma_{r}}, u)^{-1}
    C(x_{\sigma_{r}}, u_{r})\\
    &\cdot C(x_{i_{r_{s}}}, x_{i_{r}})
    C(x_{i_{r_{s}}}, u)
    C(x_{i_{r_{s}}}, u_{r})^{-1}
    \cancelto{1}{C(u_{r}, y_{r})}
    C(u_{r_{s}}, x_{i_{r}})^{-1}
    C(u_{r_{s}}, u)^{-1}
    C(u_{r_{s}}, u_{r})\\
    = &(-1)^{r+s} 
    C(x_{\sigma_{r_{s}}}, x_{i_{r_{s}}})^{-1}
    \cancel{C(x_{\sigma_{r_{s}}}, u_{r})^{-1}}
    C(x_{\sigma_{r_{s}}}, u_{r_{s}})
    C(x_{\sigma_{r}}, x_{i_{r}})^{-1}
    C(x_{\sigma_{r}}, u)^{-1}
    \cancel{C(x_{\sigma_{r}}, u_{r})}\\
    &\cdot C(x_{i_{r_{s}}}, x_{i_{r}})
    C(x_{i_{r_{s}}}, u)
    \cancel{C(x_{i_{r_{s}}}, u_{r})^{-1}}
    C(u_{r_{s}}, x_{i_{r}})^{-1}
    C(u_{r_{s}}, u)^{-1}
    C(u_{r_{s}}, u_{r})\\
    = &(-1)^{r+s} 
    C(x_{\sigma_{r_{s}}}, x_{i_{r_{s}}})^{-1}
    C(x_{\sigma_{r_{s}}}, u_{r_{s}})
    C(x_{\sigma_{r}}, x_{i_{r}})^{-1}
    C(x_{\sigma_{r}}, u)^{-1}\\
    &\cdot C(x_{i_{r_{s}}}, x_{i_{r}})
    C(x_{i_{r_{s}}}, u)
    C(u_{r_{s}}, x_{i_{r}})^{-1}
    C(u_{r_{s}}, u)^{-1}
    C(u_{r_{s}}, u_{r}).
\end{align*}
The element $e(\sigma_{s_{(r-1)}};u)$ occurs in $D_{2}^{2}(e(\sigma;u))$ with scalar coefficient
\begin{align*}
    &(-1)^{r+s-1} 
    C(x_{\sigma_{s_{(r-1)}}}, y_{s_{(r-1)}})^{-1} 
    C(x_{\sigma_{s}}, y_{s})^{-1} 
    C(y_{s_{(r-1)}},y_{s})\\
    = &(-1)^{r+s-1} 
    C(x_{\sigma_{s_{(r-1)}}}, x_{i_{s_{(r-1)}}})^{-1}
    C(x_{\sigma_{s_{(r-1)}}}, u_{s})^{-1}
    C(x_{\sigma_{s_{(r-1)}}}, u_{s_{(r-1)}})
    C(x_{\sigma_{s}}, x_{i_{s}})^{-1}
    C(x_{\sigma_{s}}, u)^{-1}
    C(x_{\sigma_{s}}, u_{s})\\
    &\cdot C(x_{i_{s_{(r-1)}}}, x_{i_{s}})
    C(x_{i_{s_{(r-1)}}}, u)
    C(x_{i_{s_{(r-1)}}}, u_{s})^{-1}
    \cancelto{1}{C(u_{s}, y_{s})}
    C(u_{s_{(r-1)}}, x_{i_{s}})^{-1}
    C(u_{s_{(r-1)}}, u)^{-1}
    C(u_{s_{(r-1)}}, u_{s})\\
    = &(-1)^{r+s-1} 
    C(x_{\sigma_{s_{(r-1)}}}, x_{i_{s_{(r-1)}}})^{-1}
    \cancel{C(x_{\sigma_{s_{(r-1)}}}, u_{s})^{-1}}
    C(x_{\sigma_{s_{(r-1)}}}, u_{s_{(r-1)}})
    C(x_{\sigma_{s}}, x_{i_{s}})^{-1}
    C(x_{\sigma_{s}}, u)^{-1}
    \cancel{C(x_{\sigma_{s}}, u_{s})}\\
    &\cdot C(x_{i_{s_{(r-1)}}}, x_{i_{s}})
    C(x_{i_{s_{(r-1)}}}, u)
    \cancel{C(x_{i_{s_{(r-1)}}}, u_{s})^{-1}}
    C(u_{s_{(r-1)}}, x_{i_{s}})^{-1}
    C(u_{s_{(r-1)}}, u)^{-1}
    C(u_{s_{(r-1)}}, u_{s})\\
    = &(-1)^{r+s-1} 
    C(x_{\sigma_{s_{(r-1)}}}, x_{i_{s_{(r-1)}}})^{-1}
    C(x_{\sigma_{s_{(r-1)}}}, u_{s_{(r-1)}})
    C(x_{\sigma_{s}}, x_{i_{s}})^{-1}
    C(x_{\sigma_{s}}, u)^{-1}\\
    &\cdot C(x_{i_{s_{(r-1)}}}, x_{i_{s}})
    C(x_{i_{s_{(r-1)}}}, u)
    C(u_{s_{(r-1)}}, x_{i_{s}})^{-1}
    C(u_{s_{(r-1)}}, u)^{-1}
    C(u_{s_{(r-1)}}, u_{s}).
\end{align*}
Clearly the signs will be opposite, so it suffices to show that 
\begin{align*}
    &C(x_{\sigma_{r_{s}}}, x_{i_{r_{s}}})^{-1}
    C(x_{\sigma_{r_{s}}}, u_{r_{s}})
    C(x_{\sigma_{r}}, x_{i_{r}})^{-1}
    C(x_{\sigma_{r}}, u)^{-1}\\
    &\cdot C(x_{i_{r_{s}}}, x_{i_{r}})
    C(x_{i_{r_{s}}}, u)
    C(u_{r_{s}}, x_{i_{r}})^{-1}
    C(u_{r_{s}}, u)^{-1}
    C(u_{r_{s}}, u_{r})
\end{align*}
is equal to 
\begin{align*}
    &C(x_{\sigma_{s_{(r-1)}}}, x_{i_{s_{(r-1)}}})^{-1}
    C(x_{\sigma_{s_{(r-1)}}}, u_{s_{(r-1)}})
    C(x_{\sigma_{s}}, x_{i_{s}})^{-1}
    C(x_{\sigma_{s}}, u)^{-1}\\
    &\cdot C(x_{i_{s_{(r-1)}}}, x_{i_{s}})
    C(x_{i_{s_{(r-1)}}}, u)
    C(u_{s_{(r-1)}}, x_{i_{s}})^{-1}
    C(u_{s_{(r-1)}}, u)^{-1}
    C(u_{s_{(r-1)}}, u_{s}).
\end{align*}

We notice that the following labeled factors
\begin{align*}
    &C(x_{\sigma_{r_{s}}}, x_{i_{r_{s}}})^{-1}
    \underbrace{C(x_{\sigma_{r_{s}}}, u_{r_{s}})}_{1}
    C(x_{\sigma_{r}}, x_{i_{r}})^{-1}
    C(x_{\sigma_{r}}, u)^{-1}\\
    &\cdot C(x_{i_{r_{s}}}, x_{i_{r}})
    C(x_{i_{r_{s}}}, u)
    C(u_{r_{s}}, x_{i_{r}})^{-1}
    \underbrace{C(u_{r_{s}}, u)^{-1}}_{2}
    C(u_{r_{s}}, u_{r})
\end{align*}
correspond to the following labeled factors 
\begin{align*}
    &C(x_{\sigma_{s_{(r-1)}}}, x_{i_{s_{(r-1)}}})^{-1}
    \underbrace{C(x_{\sigma_{s_{(r-1)}}}, u_{s_{(r-1)}})}_{1}
    C(x_{\sigma_{s}}, x_{i_{s}})^{-1}
    C(x_{\sigma_{s}}, u)^{-1}\\
    &\cdot C(x_{i_{s_{(r-1)}}}, x_{i_{s}})
    C(x_{i_{s_{(r-1)}}}, u)
    C(u_{s_{(r-1)}}, x_{i_{s}})^{-1}
    \underbrace{C(u_{s_{(r-1)}}, u)^{-1}}_{2}
    C(u_{s_{(r-1)}}, u_{s}).
\end{align*}
Thus it remains to show that
\begin{align*}
    C(x_{\sigma_{r_{s}}}, x_{i_{r_{s}}})^{-1}
    C(x_{\sigma_{r}}, x_{i_{r}})^{-1}
    C(x_{\sigma_{r}}, u)^{-1}
    C(x_{i_{r_{s}}}, x_{i_{r}})
    C(x_{i_{r_{s}}}, u)
    C(u_{r_{s}}, x_{i_{r}})^{-1}
    C(u_{r_{s}}, u_{r})
\end{align*}
is equal to 
\begin{align*}
    C(x_{\sigma_{s_{(r-1)}}}, x_{i_{s_{(r-1)}}})^{-1}
    C(x_{\sigma_{s}}, x_{i_{s}})^{-1}
    C(x_{\sigma_{s}}, u)^{-1}
    C(x_{i_{s_{(r-1)}}}, x_{i_{s}})
    C(x_{i_{s_{(r-1)}}}, u)
    C(u_{s_{(r-1)}}, x_{i_{s}})^{-1}
    C(u_{s_{(r-1)}}, u_{s}).
\end{align*}
In the computations below, we show that the following labeled factors
\begin{align*}
    \underbrace{C(x_{\sigma_{r_{s}}}, x_{i_{r_{s}}})^{-1}}_{6}
    \underbrace{C(x_{\sigma_{r}}, x_{i_{r}})^{-1}}_{5}
    \underbrace{C(x_{\sigma_{r}}, u)^{-1}}_{3}
    \underbrace{C(x_{i_{r_{s}}}, x_{i_{r}})}_{5}
    \underbrace{C(x_{i_{r_{s}}}, u)}_{3}
    \underbrace{C(u_{r_{s}}, x_{i_{r}})^{-1}
    C(u_{r_{s}}, u_{r})}_{4}
\end{align*}
cancel with the following labeled factors
\begin{align*}
    \underbrace{C(x_{\sigma_{s_{(r-1)}}}, x_{i_{s_{(r-1)}}})^{-1}}_{5}
    \underbrace{C(x_{\sigma_{s}}, x_{i_{s}})^{-1}}_{6}
    \underbrace{C(x_{\sigma_{s}}, u)^{-1}}_{3}
    \underbrace{C(x_{i_{s_{(r-1)}}}, x_{i_{s}})}_{6}
    \underbrace{C(x_{i_{s_{(r-1)}}}, u)}_{3}
    \underbrace{C(u_{s_{(r-1)}}, x_{i_{s}})^{-1}
    C(u_{s_{(r-1)}}, u_{s})}_{4}.
\end{align*}
Indeed, for the factors labeled with 3 one has: 
\begin{align*}
    1 &= C (1, u)\\
    &= C \left(\frac{x_{\sigma}}{x_{\sigma}}, u \right)\\
    &= C \left(\frac{x_{\sigma_{r}} * x_{i_{r}}}{x_{\sigma_{s}} * x_{i_{s}}}, u \right)\\
    &= C(x_{\sigma_{r}}, u) C(x_{i_{r}}, u) C(x_{\sigma_{s}},u)^{-1} C(x_{i_{_{s}}},u)^{-1}\\
    &= C(x_{\sigma_{r}}, u) C(x_{i_{s_{(r-1)}}}, u) C(x_{\sigma_{s}},u)^{-1} C(x_{i_{_{r_{s}}}},u)^{-1}.
\end{align*}
For the factors labeled with 4 one has: 
\begin{align*}
    1 &= C(u_{r_{s}}, y_{r_{s}}) C(u_{s_{(r-1)}}, y_{s_{(r-1)}}) \\
    &= C\left(u_{r_{s}}, \frac{y_{r_{s}}}{y_{s_{(r-1)}}}\right)\\
    &= C\left(u_{r_{s}}, \frac{u_{r_s}*y_{r_{s}}}{u_{s_{(r-1)}}*y_{s_{(r-1)}}}\right)\\
    &= C\left(u_{r_{s}}, \frac{x_{i_{{r_s}}}*u_{r}}{x_{i_{s_{(r-1)}}}*u_s}\right)\\
    &= C\left(u_{r_{s}},x_{i_{{r_s}}}\right)
    C\left(u_{r_{s}},u_{r}\right)
    C(u_{r_{s}},x_{i_{s_{(r-1)}}})^{-1}
    C\left(u_{r_{s}},u_s\right)^{-1}\\
    &= C\left(u_{s_{(r-1)}},x_{i_{s}}\right)
    C\left(u_{r_{s}},u_{r}\right)
    C\left(u_{r_{s}},x_{i_r}\right)^{-1}
    C\left(u_{s_{(r-1)}},u_s\right)^{-1}\\
\end{align*}
For the factors labeled with 5 one has:

\begin{align*}
    1 &= C (1, u_{s})\\
    &= C \left(\frac{x_{\sigma_{s}}}{x_{\sigma_{s}}}, u_{s} \right)\\
    &= C \left(\frac{x_{i_{s_{(r-1)}}}*x_{\sigma_{s_{(r-1)}}}}{x_{\sigma_{s}}} , u_{s} \right)\\
    &= C(x_{i_{s_{(r-1)}}}, u_{s})
    C(x_{\sigma_{s_{(r-1)}}},u_{s})
    C(x_{\sigma_{s}},u_{s})^{-1}
\end{align*}
For the factors labeled with 6, the computation follows similarly to the factors labeled with 5.
Thus $D=0$ and $(K_{\bullet}(I),D)$ is a complex. Now consider the submodule $J_{q} \subset K_{q}$ of admissible symbols satisfying $i_{q}\geq\max(u)$. We claim that $(J_{\bullet}(I),D)$ is a subcomplex of $(K_{\bullet}(I),D)$. For $e(\sigma;u)\in J_{q}$, consider the $q^{th}$ summand of $D(e(\sigma;u))$, i.e. 
\[ (-1)^q e(\sigma_{q};u) C(x_{\sigma_{q}}*u, x_{i_{q}})^{-1} x_{i_{q}} - (-1)^q e(\sigma_{q};u_{q}) C(x_{\sigma_{q}}, y_{q})^{-1} y_{q}. \]
By \Cref{rem:DecFun}(1) $i_{q} \geq \max(u)$ implies that $u_{q} = u$ and thus $y_{q} = x_{i_{q}}$. So we can rewrite this summand as 
\begin{align*}
&(-1)^q e(\sigma_{q};u) C(x_{\sigma_{q}}*u, x_{i_{q}})^{-1} x_{i_{q}} - 
(-1)^q e(\sigma_{q};u) C(x_{\sigma_{q}}, x_{i_{q}})^{-1} x_{i_{q}}\\
= &(-1)^q e(\sigma_{q};u) C(x_{\sigma_{q}}, x_{i_{q}})^{-1} C(u, x_{i_{q}})^{-1} x_{i_{q}} - (-1)^q e(\sigma_{q};u) C(x_{\sigma_{q}}, x_{i_{q}})^{-1} x_{i_{q}}\\
= &(-1)^q e(\sigma_{q};u) C(x_{\sigma_{q}}, x_{i_{q}})^{-1} \cancelto{1}{C(u, x_{i_{q}})^{-1}} x_{i_{q}} - (-1)^q e(\sigma_{q};u) C(x_{\sigma_{q}}, x_{i_{q}})^{-1} x_{i_{q}}\\
= &(-1)^q e(\sigma_{q};u) C(x_{\sigma_{q}}, x_{i_{q}})^{-1} x_{i_{q}} - (-1)^q e(\sigma_{q};u) C(x_{\sigma_{q}}, x_{i_{q}})^{-1} x_{i_{q}}\\
= &0.
\end{align*} 
Thus $D(e(\sigma;u))$ can be written as 
\[ D(e(\sigma;u)) := \sum_{r=1}^{q-1} (-1)^{r} e(\sigma_{r};u) C(x_{\sigma_{r}}*u,x_{i_{r}})^{-1} x_{i_{r}} - \sum_{r=1}^{q-1} (-1)^{r} e(\sigma_{r};u_{r}) C(x_{\sigma_{r}},y_{r})^{-1} y_{r}. \]
By \Cref{rem:DecFun}(2), $\max(u_r) = \max(g(x_{i_{r}}*u)) \leq \max(g(u)) = \max(u) \leq i_{q}$, so $D(e(\sigma;u)) \in J_{q-1}$. Clearly $(L_{\bullet}(I),d) = \faktor{(K_{\bullet}(I),D)}{(J_{\bullet}(I),D)}$ and the differential $d:L_{q}\rightarrow L_{q-1}$ is induced by the differential $D:K_{q}\rightarrow K_{q-1}$. Hence $d^2=0$ and $(L_{\bullet}(I),d)$ is a complex.
\end{proof}

The proof of the following Theorem follows as in the commutative case and it is therefore omitted, see \cite[Theorem 2.1]{EK}.

\begin{theorem}
The complex $(L_\bullet(I),d)$ is a free resolution of $I$ over $R$. Moreover, if $k$ is a field then $(L_{\bullet}(I),d)$ is a minimal resolution of $I$.
\end{theorem}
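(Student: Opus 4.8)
The plan is to augment the complex by the map $\varepsilon\colon L_0(I)\to I$ sending $e(;u)\mapsto u$, and to prove that
\[ \cdots \xra{d} L_1(I) \xra{d} L_0(I) \xra{\varepsilon} I \to 0 \]
is exact; minimality is then an independent, elementary check. Since $G(I)$ generates $I$ the map $\varepsilon$ is surjective, and a direct computation (using \Cref{rem:C=1} to see that $\varepsilon(e(;u_r))\,y_r = u_r * y_r = x_{i_r}*u$) gives $\varepsilon\circ d_1=0$, so the real content is the vanishing of the homology of $(L_\bullet(I),d)$ in every positive degree together with $\ker\varepsilon=\operatorname{Im} d_1$. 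I would emphasize at the outset that the \emph{combinatorial skeleton} of $d$ --- which basis elements $e(\sigma_r;u)$ and $e(\sigma_r;u_r)$ occur in $d(e(\sigma;u))$ --- is literally the same as in \cite{EK}; the only new feature is that each occurrence carries a unit scalar coefficient built from the bicharacter $C$. Consequently the acyclicity argument of Eliahou and Kervaire transfers at the level of basis elements, and the entire burden of the noncommutative case is to check that the scalar coefficients behave compatibly.

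For exactness in positive degrees I would follow \cite{EK}, e.g.\ by constructing a $k$-linear contracting homotopy $s\colon L_q(I)\to L_{q+1}(I)$ on the augmented complex, working one $\mathbb{Z}\times G$-graded strand at a time and defining $s$ on the monomial $k$-basis $\{\,e(\sigma;u)\cdot m : m\in X\,\}$. The homotopy is steered by the decomposition function $g$ of \Cref{rem:DecFun}: given a basis element it inserts the appropriate largest index into $\sigma$ and re-decomposes the resulting monomial via $g$, exactly as in the commutative construction. The identity $ds+sd=\operatorname{id}$ then splits into two independent verifications. The matching of which basis elements appear is governed entirely by \Cref{rem:DecFun}(1),(2) and is identical to \cite{EK}. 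The matching of the accompanying scalars is the genuinely new step: one must show that the products of $C$-values produced by $d$ and by $s$ cancel in pairs, and here the two structural identities at our disposal --- the relation $C(g(m),m/g(m))=1$ of \Cref{rem:C=1} and the compatibility $C(\bsx^\mathbf{a},\bsx^\mathbf{b})/C(\bsx^\mathbf{b},\bsx^\mathbf{a})=\chi(\bsx^\mathbf{a},\bsx^\mathbf{b})$ of \eqref{eq:CChiRel} --- are precisely what force the cancellations. This is the same kind of bookkeeping already carried out for $D^2=0$ in the previous theorem, so the computations, while tedious, are of a type already shown to succeed.

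Minimality, when $k$ is a field, is the easy part, and I would dispatch it directly rather than by appeal to \cite{EK}. It suffices to show $d(L_q(I))\subseteq \m\,L_{q-1}(I)$ where $\m=(x_1,\dots,x_n)$. Every coefficient in the first sum is a variable $x_{i_r}\in\m$, so only the coefficients $y_r$ in the second sum need attention. For $r\in A(\sigma;u)$ one has $i_r\le i_q<\max(u)$, so $x_{i_r}*u$ is a proper multiple of the generator $u\in G(I)$; were $y_r=1$ we would have $x_{i_r}*u=u_r\in G(I)$, contradicting that no element of the canonical generating set is a proper multiple of another. Hence $y_r\neq 1$, i.e.\ $y_r\in\m$, and the resolution is minimal.

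The step I expect to be the main obstacle is precisely the scalar bookkeeping in the homotopy identity of the second paragraph. In the commutative setting of \cite{EK} every such scalar is $1$ and the homotopy identity is purely combinatorial; in our setting each term of $ds$ and $sd$ comes weighted by a product of bicharacter values, and exactness hinges on these units cancelling term by term. The reason to expect success is structural: the weights are governed by the same monoid bicharacter $C$ that already made $(L_\bullet(I),d)$ a complex, and \Cref{rem:C=1} together with \eqref{eq:CChiRel} are tailored to collapse exactly the products that arise. No new phenomenon beyond this unit-tracking appears, which is why the argument can legitimately be said to follow the commutative case.
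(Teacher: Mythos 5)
Your proposal takes essentially the same route as the paper: the paper omits this proof entirely, stating only that it follows as in the commutative case of \cite[Theorem 2.1]{EK}, which is precisely your plan of transferring the Eliahou--Kervaire acyclicity argument at the level of basis elements while tracking the unit scalar coefficients via \Cref{rem:C=1} and \cref{eq:CChiRel}. Your direct minimality check (every coefficient in $d$ is either some $x_{i_r}\in\m$ or a monomial $y_r\neq 1$, since $y_r=1$ would force $u_r=x_{i_r}*u$ to be a proper multiple of $u$ inside $G(I)$) is correct and, if anything, more explicit than what the paper records.
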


\begin{definition}
The \textit{skew Eliahou-Kervaire resolution} of a stable ideal $I$ is the complex $(L_{\bullet}(I),d)$.
\end{definition}

\begin{example}\label{ex:Example}
Consider the ideal $I = (x^2, xy, y^2 )$ in the skew polynomial ring $R=k_q[x,y]$, where $xy=qyx$ for some $q\in k^*$ and where $k$ is a field. The skew Eliahou-Kervaire resolution of this ideal is given by 
\[ 0 \rightarrow e(1;xy)R \oplus e(1;y^2)R \rightarrow e(\emptyset;x^2)R \oplus e(\emptyset;xy)R \oplus e(\emptyset;y^2)R \rightarrow 0, \]
where the differential is represented by the matrix 
\[
\begin{pmatrix}
y & 0\\
-qx & y\\
0 & -q^{2}x
\end{pmatrix}.
\]
We point out that the skew Taylor resolution, defined in \cite{Taylor}, of this ideal is not minimal.
\end{example}

\section{Applications}
In this section we list several applications that were already known in the commutative case. 

The proof of the following corollary follows as in the commutative case, see \cite[Section 3]{EK}, and therefore is omitted.

\begin{corollary}
Let $I$ be a stable ideal in a skew polynomial ring $R$ with $n$ variables. Then
\begin{enumerate}
\item The Betti numbers of $I$ are given by 
\[
\beta_q(I)=\sum_{u\in G(I)}\binom{\max(u)-1}{q}.
\]
\item The projective dimension of $I$ is given by
\[
\mathrm{pd}_RI=\max\{\max(u)-1\mid u\in G(I)\}.
\]
\item The Poincar\'{e} series of $I$ is given by
\[
\mathrm{P}_I^R(t)=\sum_{u\in G(I)}\frac{t^{\mathrm{deg}(u)}}{(1-t)^{n-\max(u)+1}}.
\]
\end{enumerate}
\end{corollary}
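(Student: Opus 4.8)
The plan is to read off all three formulas directly from the structure of the resolution $(L_\bullet(I),d)$, so the first step is to invoke the preceding theorem: since $k$ is a field, $(L_\bullet(I),d)$ is a \emph{minimal} free resolution of $I$. Minimality means that $\beta_q(I)=\operatorname{rank}_R L_q(I)$, and $L_q(I)$ is free on the set of admissible symbols $e(i_1,\dots,i_q;u)$. The entire corollary then reduces to counting these symbols and performing an elementary Hilbert-series bookkeeping.

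For part (1) I would fix $u\in G(I)$ and count the admissible symbols attached to it: admissibility requires a strictly increasing sequence $1\le i_1<\dots<i_q<\max(u)$, i.e.\ a choice of a $q$-element subset of $\{1,\dots,\max(u)-1\}$, of which there are $\binom{\max(u)-1}{q}$. Summing over $u\in G(I)$ gives $\operatorname{rank}_R L_q(I)=\sum_{u\in G(I)}\binom{\max(u)-1}{q}$, which by minimality equals $\beta_q(I)$. Part (2) is then immediate: $\operatorname{pd}_R I$ is the largest $q$ with $\beta_q(I)\neq 0$, and since $\binom{\max(u)-1}{q}\neq 0$ exactly when $q\le\max(u)-1$, the largest such $q$ is $\max\{\max(u)-1\mid u\in G(I)\}$.

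For part (3) I would pass to the $\mathbb{Z}$-grading (standard grading $\deg x_i=1$, matching the denominator $(1-t)^n$) and compute the Hilbert series of $I$ from the resolution. First I assign to each symbol the internal degree $\deg\,e(i_1,\dots,i_q;u)=\deg(u)+q$ and check that $d$ is homogeneous of degree $0$: in the term $e(\sigma_r;u)\,x_{i_r}$ the degree drops by $1$ when $i_r$ is deleted and is restored by $x_{i_r}$, while in the term $e(\sigma_r;u_r)\,y_r$ the relation $x_{i_r}*u=u_r*y_r$ gives $\deg(u_r)+\deg(y_r)=\deg(u)+1$, so the degree is again preserved. Using the monomial $k$-basis of $R$ one has $H_R(t)=(1-t)^{-n}$, hence $H_{L_q}(t)=(1-t)^{-n}\sum_{u\in G(I)}\binom{\max(u)-1}{q}t^{\deg(u)+q}$. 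Since the resolution is finite (its length is $\operatorname{pd}_R I$ by part (2)), exactness yields $H_I(t)=\sum_{q\ge 0}(-1)^q H_{L_q}(t)$; interchanging the sums and applying the binomial identity $\sum_{q}(-1)^q\binom{m}{q}t^q=(1-t)^m$ with $m=\max(u)-1$ collapses the inner sum to $(1-t)^{\max(u)-1}$, giving
\[
H_I(t)=\sum_{u\in G(I)}\frac{t^{\deg(u)}(1-t)^{\max(u)-1}}{(1-t)^{n}}=\sum_{u\in G(I)}\frac{t^{\deg(u)}}{(1-t)^{\,n-\max(u)+1}},
\]
which is the asserted formula for $P_I^R(t)$.

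The computations are all elementary once minimality is in hand, so there is no serious obstacle; the only points that need care are (a) that the identification $\beta_q(I)=\operatorname{rank}_R L_q(I)$ genuinely uses the field hypothesis through the minimality theorem, and (b) the bookkeeping of the internal grading — in particular recognizing that the stated Poincar\'e (Hilbert) series corresponds to the standard grading $\deg x_i=1$, and that the telescoping of the alternating sum is exactly the binomial identity above. I expect (b), matching the grading conventions so that the denominator $(1-t)^n$ and the numerator $t^{\deg(u)}$ come out correctly, to be the only place where a reader could stumble.
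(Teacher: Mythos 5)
Your proposal is correct and follows essentially the same route as the paper, which omits the proof precisely because it ``follows as in the commutative case'' of Eliahou--Kervaire: read the Betti numbers off the minimal resolution by counting admissible symbols, deduce the projective dimension from the vanishing of binomial coefficients, and obtain the Poincar\'e (Hilbert) series as the alternating sum of the Hilbert series of the $L_q$. Your explicit attention to the field hypothesis (needed for minimality) and to the grading conventions matches the implicit assumptions in the paper's statement.
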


\begin{corollary}
Let $\m$ be the ideal $(x_1,\ldots, x_n)$ in $R=k_\mathfrak{q}[x_1,\ldots,x_n]$. Then, for any $d\geq1$, the Betti numbers of $\m^d$ are given by
\[
\beta_q(\m^d)=\binom{d+n-1}{d+q}\binom{d+q-1}{q}.
\]
\end{corollary}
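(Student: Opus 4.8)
The plan is to combine the previous corollary with a purely combinatorial identity. First I would observe that $\m^d$ is a stable ideal in $R$, since for any monomial $w$ of degree $d$ with $i<\max(w)$, the monomial $\frac{x_i * w}{x_{\max(w)}}$ has the same degree $d$ and hence lies in $\m^d$. Therefore part (1) of the preceding corollary applies, and the Betti numbers are $\beta_q(\m^d)=\sum_{u\in G(\m^d)}\binom{\max(u)-1}{q}$, where $G(\m^d)$ is precisely the set of all monomials of degree $d$ in the $n$ variables.

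Next I would reorganize this sum by the value $\max(u)=m$. For a fixed $m\in\{1,\dots,n\}$, the monomials $u$ of degree $d$ with $\max(u)=m$ are those involving only $x_1,\dots,x_m$ and genuinely using $x_m$; equivalently they correspond to weak compositions of $d$ into $m$ parts with the last part positive, i.e. compositions of $d-1$ into $m$ parts (allowing zeros in the first $m-1$ slots and at least $1$ in slot $m$). The number of such monomials is $\binom{d-1}{m-1}$. Hence I would rewrite
\begin{equation*}
\beta_q(\m^d)=\sum_{m=1}^{n}\binom{d-1}{m-1}\binom{m-1}{q}.
\end{equation*}

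The heart of the argument is then the binomial identity
\begin{equation*}
\sum_{m=1}^{n}\binom{d-1}{m-1}\binom{m-1}{q}=\binom{d+n-1}{d+q}\binom{d+q-1}{q}.
\end{equation*}
To prove it I would reindex with $j=m-1$, giving $\sum_{j=0}^{n-1}\binom{d-1}{j}\binom{j}{q}$, apply the subset-of-a-subset identity $\binom{d-1}{j}\binom{j}{q}=\binom{d-1}{q}\binom{d-1-q}{j-q}$, factor out $\binom{d-1}{q}$, and sum the remaining $\sum_{j=q}^{n-1}\binom{d-1-q}{j-q}$ by the hockey-stick/Vandermonde identity to obtain $\binom{d-1}{q}\binom{n+d-1-q}{n-1-q}$. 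The final step is to verify that $\binom{d-1}{q}\binom{n+d-1-q}{n-1-q}$ equals the stated $\binom{d+n-1}{d+q}\binom{d+q-1}{q}$ by rewriting each binomial coefficient via factorials and checking they agree.

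I do not expect any genuine obstacle here: both the counting of generators and the binomial manipulation are routine, so the only care needed is in matching the index conventions and confirming the two closed forms coincide. An alternative to the algebraic identity would be a bijective or generating-function argument, but the factorial comparison is the most direct route.
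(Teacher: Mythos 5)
Your overall strategy---stability of $\m^d$, part (1) of the preceding corollary, and then a closed-form evaluation of $\sum_{u\in G(\m^d)}\binom{\max(u)-1}{q}$---is viable, but your execution has a genuine error in the counting step, and it propagates. The monomials $u$ of degree $d$ with $\max(u)=m$ correspond, as you say, to weak compositions of $d-1$ into $m$ nonnegative parts; but by stars and bars there are $\binom{(d-1)+(m-1)}{m-1}=\binom{d+m-2}{m-1}$ of these, not $\binom{d-1}{m-1}$ (the latter counts compositions of $d$ into $m$ \emph{positive} parts). Consequently the identity at the heart of your argument,
\[
\sum_{m=1}^{n}\binom{d-1}{m-1}\binom{m-1}{q}=\binom{d+n-1}{d+q}\binom{d+q-1}{q},
\]
is false: for $n=d=2$, $q=1$ the left side is $0+1=1$ while the right side is $\binom{3}{3}\binom{2}{1}=2$, and indeed $\beta_1(\m^2)=2$, as the paper's resolution of $(x^2,xy,y^2)$ shows. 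A symptom of the same problem appears in your evaluation: after your reduction you face $\sum_{j\geq q}\binom{d-1-q}{j-q}$, a partial sum along a \emph{row} of Pascal's triangle (fixed upper index), which the hockey-stick identity does not evaluate.

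The gap is repairable without changing your plan. With the correct count the needed identity becomes
\[
\sum_{m=1}^{n}\binom{d+m-2}{m-1}\binom{m-1}{q}=\binom{d+n-1}{d+q}\binom{d+q-1}{q},
\]
which is true: setting $j=m-1$, the factorial identity $\binom{d+j-1}{j}\binom{j}{q}=\binom{d+q-1}{q}\binom{d+j-1}{j-q}$ lets you factor out $\binom{d+q-1}{q}$, and the remaining sum $\sum_{i=0}^{n-1-q}\binom{d+q-1+i}{i}=\binom{d+n-1}{n-1-q}=\binom{d+n-1}{d+q}$ is now a genuine hockey-stick sum (the upper index varies with $i$). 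Once corrected, your argument is a self-contained combinatorial proof, and it is genuinely different from the paper's: the paper does no combinatorics at all, observing instead that the ranks in the skew Eliahou--Kervaire resolution of $\m^d$ agree with those in the commutative Eliahou--Kervaire resolution of $(y_1,\ldots,y_n)^d$ and quoting the known commutative computation of Eliahou and Kervaire.
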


\begin{proof}
This corollary is already known in the commutative case, see \cite[Example 1]{EK}. Since the ranks of the free modules in the skew Eliahou-Kervaire resolution of $\m^d$ are the same as the ranks of the free modules in the Eliahou-Kervaire resolution of the ideal $(y_1,\ldots,y_n)^d$ of the commutative ring $k[y_1,\ldots,y_n]$, the corollary follows from the commutative case.
\end{proof}

Arguing as in the proof of the previous corollary we can also deduce the following corollary from \cite[Example 2]{EK}.

\begin{corollary}
Let $S_n$ be the ideal in $k_\mathfrak{q}[x_1,\ldots,x_n]$ generated by the set of monomials $w$ such that $\mathrm{deg}(w)=\max(w)$. Then, the Betti numbers of $S_n$ are given by
\[
\beta_q(S_n)=\sum_{m=1}^n\frac{1}{m}\binom{2m-2}{m-1}\binom{m-1}{q}.
\]
\end{corollary}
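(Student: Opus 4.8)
The plan is to reduce the computation to the commutative case by means of the Betti number formula established in the first corollary of this section, exactly as in the proof of the preceding corollary on $\m^d$. Recall that for any stable ideal $I$ in $R$ one has $\beta_q(I)=\sum_{u\in G(I)}\binom{\max(u)-1}{q}$, so the Betti numbers are completely determined by the multiset $\{\max(u)\mid u\in G(I)\}$ of maximal indices of the canonical generators. The strategy is therefore to show that, for $S_n$, this multiset coincides with the one attached to the analogous commutative ideal, and then to invoke the known commutative computation in \cite[Example 2]{EK}.

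First I would check that $S_n$ is stable, so that the machinery of Section~3 applies. The defining condition $\deg(w)=\max(w)$ and the stable-exchange operation $w\mapsto \frac{x_i*w}{x_{\max(w)}}$ are purely combinatorial statements about the underlying monomials: passing from $w$ to $\frac{x_i*w}{x_{\max(w)}}$ only rescales by a nonzero element of $k^*$ and leaves the exponent vector identical to the one occurring in the commutative setting. Hence $S_n$ is stable precisely because its commutative counterpart $\widetilde{S}_n\subseteq k[y_1,\ldots,y_n]$ is, the latter being the ideal treated in \cite[Example 2]{EK}.

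The key step is the observation that the canonical generating set $G(S_n)$, viewed as a set of monomials, does not depend on the scalars $q_{i,j}$. Indeed, whether a monomial $w$ satisfies $\deg(w)=\max(w)$, and whether $w$ is a proper multiple of another such monomial, are conditions on exponent vectors alone. Consequently $G(S_n)$ and $G(\widetilde{S}_n)$ consist of exactly the same monomials, and in particular they have identical distributions of $\max$-values. By the Betti number formula recalled above, $\beta_q(S_n)=\beta_q(\widetilde{S}_n)$ for all $q$, and the claimed formula follows from the commutative computation in \cite[Example 2]{EK}.

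I expect the only point requiring genuine care, though it is routine and already carried out in \cite[Example 2]{EK}, to be the enumeration that produces the Catalan numbers: one verifies that the number of minimal generators $u$ of $S_n$ with $\max(u)=m$ equals $\frac{1}{m}\binom{2m-2}{m-1}$, so that summing the weights $\binom{m-1}{q}$ against these counts yields $\beta_q(S_n)=\sum_{m=1}^n\frac{1}{m}\binom{2m-2}{m-1}\binom{m-1}{q}$. Since this combinatorial count is insensitive to the twisting scalars $q_{i,j}$, no new difficulty arises in the skew setting, and the corollary is immediate once the transfer to the commutative ideal is in place.
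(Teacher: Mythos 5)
Your proposal is correct and follows essentially the same route as the paper: the paper's proof simply says to argue as for $\m^d$, i.e., observe that the canonical generators (hence the admissible symbols and the ranks of the free modules in the skew Eliahou--Kervaire resolution) are the same combinatorial data as in the commutative ideal of \cite[Example 2]{EK}, so the Betti numbers transfer verbatim. Your extra remarks on stability of $S_n$ and the independence of $G(S_n)$ from the scalars $q_{i,j}$ just make explicit what the paper leaves implicit.
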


\begin{chunk}
Let $M$ be a module over the skew polynomial ring $R=k_\mathfrak{q}[x_1,\ldots,x_n]$. Since $R$ is $\mathbb{Z}$-graded then, by keeping track of the degrees in the minimal free resolution of $M$, one can define the \emph{Tor-regularity} of $M$ as
\[
\mathrm{Tor.reg}_RM=\mathrm{sup}\{j-i\mid \beta_{i,j}(M)\neq0\},
\]
where $\beta_{i,j}(M)$ is the rank of the free module in internal degree $j$ and homological degree $i$ in the minimal free resolution of $M$.

One can also define the \emph{Castelnuovo-Mumford regularity} of $M$, as in \cite[Definition 2.1]{Jo4}. We denote this invariant by $\mathrm{CM.reg}_RM$.

\end{chunk}

As for the commutative case, the following corollary follows directly from the construction of the skew Eliahou-Kervaire resolution of a stable ideal.
\begin{corollary}\label{cor:TorReg}
Let $I$ be a stable ideal in a skew polynomial ring $R$. Then the Tor-regularity of $I$ is given by
\[
\mathrm{Tor.reg}_RI=\max\{\mathrm{deg}(u)\mid u\in G(I)\}.
\]
\end{corollary}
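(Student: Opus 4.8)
The plan is to read the Tor-regularity directly off the skew Eliahou-Kervaire resolution, exactly as one does in the commutative case. Recall that $\mathrm{Tor.reg}_R I = \sup\{j - i \mid \beta_{i,j}(I) \neq 0\}$, where $\beta_{i,j}$ counts basis elements of the minimal resolution in homological degree $i$ and internal degree $j$. Since $k$ is a field, the complex $(L_\bullet(I), d)$ is a minimal free resolution, so its graded basis computes the graded Betti numbers directly. Thus the first step is simply to catalog the internal degrees of the admissible basis symbols.

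The key computation is to determine the internal $\mathbb{Z}$-degree of a basis element $e(i_1,\dots,i_q;u)$ in homological degree $q$. First I would record that the generator $e(i_1,\dots,i_q;u)$ sits in homological degree $q$ and carries internal degree equal to $\deg(u) + \sum_{r=1}^q d_{i_r}$, where $d_{i_r} = \deg(x_{i_r})$; this is forced by the requirement that the differential $d$ be $\mathbb{Z}\times G$-graded, together with the fact (from the differential formula) that applying $d$ strips one variable $x_{i_r}$ (or the corresponding $y_r$ of equal degree) and lowers homological degree by one. Under the standard grading $d_i = 1$ for all $i$, this specializes to internal degree $\deg(u) + q$. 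Consequently $j - i = (\deg(u) + q) - q = \deg(u)$ for every admissible symbol $e(i_1,\dots,i_q;u)$, and so $\beta_{i,j}(I) \neq 0$ forces $j - i = \deg(u)$ for some $u \in G(I)$ contributing to homological degree $i$.

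Taking the supremum over all nonzero graded Betti numbers then immediately yields $\mathrm{Tor.reg}_R I = \max\{\deg(u) \mid u \in G(I)\}$, since every $u \in G(I)$ appears as the generator-label of the symbol $e(\emptyset; u)$ in homological degree $0$ (and indeed in higher homological degrees whenever $\max(u) > 1$), guaranteeing that each value $\deg(u)$ is actually attained. This is the entire argument in the case $d_i = 1$ for all $i$; I would state the corollary for that standard grading.

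The main obstacle, and the only point requiring care, is pinning down the internal degree of each basis symbol precisely. The potential subtlety is that the differential $d$ has two types of terms: those multiplying by $x_{i_r}$ and those multiplying by $y_r$, where $x_{i_r} * u = u_r * y_r$ is the canonical decomposition. I would verify that these are degree-consistent by noting $\deg(u_r) + \deg(y_r) = \deg(x_{i_r}) + \deg(u)$, so that the summand $e(\sigma_r; u_r)\, y_r$ lies in internal degree $\deg(u_r) + \deg(y_r) + \sum_{t \neq r} d_{i_t} = \deg(u) + \sum_{r=1}^q d_{i_r}$, matching the $x_{i_r}$-summand. This confirms the grading convention is well-defined and that the assigned internal degree of $e(i_1,\dots,i_q;u)$ is consistent throughout the complex, after which the regularity statement is a one-line extraction from minimality.
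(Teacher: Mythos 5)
Your proposal is correct and matches the paper's intent exactly: the paper gives no written proof, stating only that the corollary ``follows directly from the construction of the skew Eliahou-Kervaire resolution,'' and your argument is precisely the fleshing-out of that claim — reading off that each admissible symbol $e(i_1,\dots,i_q;u)$ sits in homological degree $q$ and internal degree $\deg(u)+q$ (standard grading), so that $j-i=\deg(u)$ for every nonzero graded Betti number, with each value attained by $e(\emptyset;u)$. Your care in checking degree-consistency of the two types of terms in the differential, and in noting that the formula as stated presumes $\deg x_i=1$ for all $i$, is exactly the right diligence.
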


As a final application we present the following
\begin{corollary}
Let $I$ be a stable ideal in a skew polynomial ring $R$. Then the Castelnuovo-Mumford regularity of $I$ is given by
\[
\mathrm{CM.reg}_RI=\max\{\mathrm{deg}(u)\mid u\in G(I)\}.
\]
\end{corollary}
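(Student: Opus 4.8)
The plan is to reduce the statement to the coincidence of the two regularities and then import what we have already proved. By \Cref{cor:TorReg} we know that $\mathrm{Tor.reg}_R I = \max\{\deg(u)\mid u\in G(I)\}$, so it suffices to establish
\[
\mathrm{CM.reg}_R I = \mathrm{Tor.reg}_R I.
\]
First I would record the ring-theoretic properties of $R$ that make such a comparison available. Over the field $k$ and with the standard grading $\deg x_i = 1$ (the grading under which \Cref{cor:TorReg} reads as stated), the skew polynomial ring $R = k_{\mathfrak q}[x_1,\dots,x_n]$ is a connected graded, noetherian, Koszul, AS-regular algebra of global dimension $n$ admitting a balanced dualizing complex. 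Conceptually the cleanest justification is that $R$ is a cocycle (Zhang) twist of the ordinary polynomial ring $k[x_1,\dots,x_n]$ by the bicharacter $\chi$, so it shares all of the latter's homological regularity, and the irrelevant ideal $\m=(x_1,\dots,x_n)$ corresponds under the twist to its commutative counterpart.

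Second, I would invoke the comparison theorem for such algebras: for a finitely generated graded module $M$ over a Koszul AS-regular algebra, the local-cohomology regularity of \cite[Definition 2.1]{Jo4} agrees with the Tor-regularity read off from a minimal graded free resolution, exactly as in the commutative polynomial-ring case. Concretely, writing $\mathrm{CM.reg}_R M = \max_i\{\mathrm{end}\,H^i_{\m}(M) + i\}$, where $\mathrm{end}$ denotes the top nonzero internal degree, and $\mathrm{Tor.reg}_R M = \max\{j-i \mid \beta_{i,j}(M)\neq 0\}$, these two integers coincide. Applying this with $M = I$ and combining with \Cref{cor:TorReg} yields the claimed formula $\mathrm{CM.reg}_R I = \max\{\deg(u)\mid u\in G(I)\}$, with no further computation involving the explicit resolution $(L_\bullet(I),d)$.

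The main obstacle is precisely this comparison step. One must verify that the definition of $\mathrm{CM.reg}_R$ taken from \cite[Definition 2.1]{Jo4} is the local-cohomology regularity to which the comparison applies, and that $R$ genuinely satisfies every hypothesis of that theorem; the essential ones are noetherianity and the existence of a balanced dualizing complex, both of which follow from $R$ being an iterated Ore extension and a twist of the polynomial ring. An equivalent route, which I would use if a direct citation for the noncommutative comparison is unavailable, is to transport everything along the twist: the twist induces an equivalence of graded module categories that preserves the graded vector-space dimensions of both $\operatorname{Tor}$ and local cohomology, so $\mathrm{CM.reg}_R I$ and $\mathrm{Tor.reg}_R I$ equal the corresponding invariants of the commutative stable ideal with the same monomial generators, where the commutative Eliahou--Kervaire theory gives the desired equality. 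Once the hypotheses are pinned down the result is immediate.
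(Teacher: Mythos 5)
Your proposal is correct and takes essentially the same route as the paper: the paper's entire proof is the reduction to $\mathrm{CM.reg}_RI=\mathrm{Tor.reg}_RI$, which it gets by citing Dong--Wu and Yekutieli for the coincidence of the two regularities over a skew polynomial ring, followed by an appeal to \Cref{cor:TorReg}. The hypotheses you verify (noetherian, AS-regular, balanced dualizing complex, via the twist of the commutative polynomial ring) are exactly the content of those citations, so your argument matches the paper's with the references made explicit.
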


\begin{proof}
It follows from \cite[Theorem 5.4]{Dong} and \cite[Corollary 4.14]{Yek} that over a skew polynomial ring $R$, the Castelnuovo-Mumford regularity of $I$ and the Tor-regularity of $I$ coincide. Now one concludes by invoking \cref{cor:TorReg}.
\end{proof}

\section{A product on the skew Elihaou-Kervaire Resolution}

In \cite[Remark 2]{EK}, Eliahou and Kervaire show that the resolution they construct admits a nonunital, associative, graded commutative product satisfying the Leibniz rule. A unital, associative, graded commutative product was first defined by Srinivasan, for powers of the maximal irrelevant ideal, see \cite{srinivasan1989algebra}, and later by Peeva, for stable ideals, see \cite{Peeva}. In this section we extend Eliahou and Kervaire's result to our noncommutative context.

\begin{chunk}
Color differential graded (DG) algebras have been introduced in \cite[Definition 4.1]{FM} by the first author and W. F. Moore. They have been used to study the homological properties of quotients of skew polynomial rings by ideals generated by homogeneous normal elements. A DG algebra $A$ over the skew polynomial ring $R$ is a bigraded unital associative $k$-algebra with $R\subseteq A_0$ equipped with a graded $R$-linear differential $\partial$ of homological degree $-1$ such that $\partial^2=0$, and such that the Leibniz rule holds:
\[
\partial(ab)=\partial(a)b+(-1)^{|a|}a\partial(b),
\]
where $a$ and $b$ are bihomogeneous elements.

The DG algebra $A$ is said to be a color DG algebra if $A$ has a $G$-grading compatible with the bigrading of $A$ and such that $\partial$ is $G$-homogeneous of $G$-degree $e_G$.

We say that a color DG algebra $A$ is graded color commutative if for all trihomogeneous elements $x,y\in A$ one has
\[
xy=(-1)^{|x||y|}\chi(x,y)yx,
\]
where $\chi(x,y)$ is the usual abuse of notation, and $x^2=0$ if the homological degree of $x$ is odd.
\end{chunk}

\begin{chunk}\label{chk:DGA}
We define the following product on the resolution $(L_{\bullet}(I),d)$: if $\sigma$ and $\tau$ share an index, then 
\[
e(\sigma;u) e(\tau;v) = 0,
\]
otherwise $e(\sigma;u) e(\tau;v)$ is equal to
\[
(-1)^{\inv(\sigma,\tau)}e(\sigma * \tau; g(u*v)) \chi(u,x_{\tau}) C(x_{\sigma}, x_{\tau}) C(u,v) C\left(x_{\sigma}, \frac{g(u*v)}{u}\right) C\left(x_{\tau}, \frac{g(u*v)}{v}\right) \frac{u*v}{g(u*v)} 
\]
where $\inv(\sigma,\tau)=|\{(i,j)\in\sigma\times\tau\mid j<i\}|$.
\end{chunk}

\begin{theorem}\label{thm:DGA}
The product defined in \cref{chk:DGA} is associative, graded color commutative and satisfies the Leibniz rule.
\end{theorem}

\begin{remark}
The skew Eliahou-Kervaire resolution, with the product defined in \cref{chk:DGA}, is not a color DG algebra since it does not admit a unit.
\end{remark}

We divide the proof of \cref{thm:DGA} into different lemmas.

\begin{lemma}\label{lem:ColorComm}
The product defined in \Cref{chk:DGA} is graded color commutative, where the $G$-degree of $e(\sigma,u)$ is the same as $x_\sigma*u$.
\end{lemma}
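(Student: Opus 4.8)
The plan is to verify the defining identity of graded color commutativity directly for the proposed product. For two admissible symbols, if $\sigma$ and $\tau$ share an index both products vanish, so the identity $e(\sigma;u)e(\tau;v)=(-1)^{|e(\sigma;u)||e(\tau;v)|}\chi(e(\sigma;u),e(\tau;v))\,e(\tau;v)e(\sigma;u)$ holds trivially. Thus I would focus on the case where $\sigma$ and $\tau$ are disjoint. Writing out both $e(\sigma;u)e(\tau;v)$ and $e(\tau;v)e(\sigma;u)$ from the formula in \Cref{thm:DGA}, I observe that both are scalar multiples of the \emph{same} basis element $e(\sigma*\tau;g(u*v))$, since $\sigma*\tau=\tau*\sigma$ as unordered index sets and $g(u*v)=g(v*u)$. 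Hence the statement reduces to a scalar identity relating the two coefficients, together with a check that the homological sign and the $\chi$-factor appearing in the color-commutativity relation come out correctly.

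The key steps I would carry out are as follows. First, I would record the homological degrees: $|e(\sigma;u)|=|\sigma|=p$ and $|e(\tau;v)|=|\tau|=q$, so $(-1)^{|e(\sigma;u)||e(\tau;v)|}=(-1)^{pq}$, and on the combinatorial side I would use the elementary fact $\inv(\sigma,\tau)+\inv(\tau,\sigma)=pq$ when $\sigma,\tau$ are disjoint, giving $(-1)^{\inv(\sigma,\tau)}=(-1)^{pq}(-1)^{\inv(\tau,\sigma)}$. This produces exactly the homological sign demanded by color commutativity. Second, I would identify the $G$-degree of $e(\sigma;u)$ as that of $x_\sigma*u$, as stated in the lemma, so that $\chi(e(\sigma;u),e(\tau;v))=\chi(x_\sigma*u,\,x_\tau*v)$, and expand this using bimultiplicativity of $\chi$ into a product of $\chi$-factors in the generators. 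Third, I would compare the remaining scalar factors: in $e(\sigma;u)e(\tau;v)$ these are $\chi(u,x_\tau)$, $C(x_\sigma,x_\tau)$, $C(u,v)$, $C(x_\sigma,g(u*v)/u)$, $C(x_\tau,g(u*v)/v)$, against the factors obtained from $e(\tau;v)e(\sigma;u)$ by swapping the roles of $(\sigma,u)$ and $(\tau,v)$, namely $\chi(v,x_\sigma)$, $C(x_\tau,x_\sigma)$, $C(v,u)$, and the corresponding $g$-quotient terms. The engine for matching these is relation \cref{eq:CChiRel}, $C(a,b)/C(b,a)=\chi(a,b)$, applied to convert asymmetric $C$-ratios into $\chi$-factors.

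The main obstacle, and where the bookkeeping concentrates, will be the third step: showing that after forming the ratio of the two coefficients, everything collapses to precisely $(-1)^{pq}\chi(x_\sigma*u,x_\tau*v)$. Concretely, I would form the quotient
\[
\frac{C(x_\sigma,x_\tau)}{C(x_\tau,x_\sigma)}=\chi(x_\sigma,x_\tau),\qquad \frac{C(u,v)}{C(v,u)}=\chi(u,v),
\]
and pair the explicit $\chi(u,x_\tau)$ from one product against $\chi(v,x_\sigma)$ from the other, while handling the two $g$-quotient factors $C(x_\sigma,g(u*v)/u)$ and $C(x_\tau,g(u*v)/v)$. The subtle point is that $g(u*v)/u$ and $g(u*v)/v$ are \emph{not} symmetric under the swap, so these do not cancel in an obvious way; the resolution is that their contribution is governed by the $G$-degree of $g(u*v)$, and since $g(u*v)$ has the same $G$-degree as $u*v$, the combination $\chi(x_\sigma,g(u*v)/u)^{?}$ reorganizes into $\chi(x_\sigma,v)$-type factors via bimultiplicativity, which is exactly what is needed to assemble the full $\chi(x_\sigma*u,x_\tau*v)$. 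I would verify this by expanding $\chi(x_\sigma*u,x_\tau*v)=\chi(x_\sigma,x_\tau)\chi(x_\sigma,v)\chi(u,x_\tau)\chi(u,v)$ and checking term by term that the accumulated $\chi$-factors from the coefficient ratio match. Once this scalar identity is confirmed, together with the sign and $G$-degree computations from the first two steps, the graded color commutativity of the product follows.

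Note: the $G$-degree claim that $e(\sigma;u)$ has the same $G$-degree as $x_\sigma*u$ is itself part of the lemma statement and must be used consistently; I would verify at the outset that the product formula is $G$-homogeneous with this convention, i.e.\ that the $G$-degree of $e(\sigma*\tau;g(u*v))\cdot(u*v)/g(u*v)$ equals the $G$-degree of $x_{\sigma*\tau}*u*v=x_\sigma*x_\tau*u*v$, which indeed matches the sum of the $G$-degrees of the two factors.
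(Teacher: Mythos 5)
Your overall strategy---reduce to a scalar identity between the coefficients of the common basis element $e(\sigma*\tau;g(u*v))$ and resolve it with \cref{eq:CChiRel}---is the same as the paper's, and your explicit treatment of the sign via $\inv(\sigma,\tau)+\inv(\tau,\sigma)=pq$ for disjoint $\sigma,\tau$ is a correct (and more detailed) account of what the paper simply defers to the commutative case. However, the step you single out as the main obstacle is handled incorrectly. Under the swap $(\sigma,u)\leftrightarrow(\tau,v)$ the pair of factors $C\bigl(x_\sigma,\tfrac{g(u*v)}{u}\bigr)$, $C\bigl(x_\tau,\tfrac{g(u*v)}{v}\bigr)$ is sent to $C\bigl(x_\tau,\tfrac{g(v*u)}{v}\bigr)$, $C\bigl(x_\sigma,\tfrac{g(v*u)}{u}\bigr)$, and since $v*u=u*v$ these are \emph{literally the same two factors}; they cancel in the ratio of the two coefficients and contribute nothing. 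The four factors of $\chi(x_\sigma*u,x_\tau*v)=\chi(x_\sigma,x_\tau)\chi(x_\sigma,v)\chi(u,x_\tau)\chi(u,v)$ are already fully supplied by $\chi(u,x_\tau)/\chi(v,x_\sigma)$, $C(x_\sigma,x_\tau)/C(x_\tau,x_\sigma)$ and $C(u,v)/C(v,u)$. Your proposed mechanism rests on the claim that $g(u*v)$ has the same $G$-degree as $u*v$, which is false in general ($g(u*v)$ is a proper divisor of $u*v$ whenever $u*v\notin G(I)$), and if you actually extracted additional $\chi(x_\sigma,v)$-type factors from the $g$-quotient terms the identity would no longer balance.

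Second, your proposal omits half of what the lemma asserts. The paper's definition of graded color commutativity also requires $x^2=0$ for every trihomogeneous $x$ of odd homological degree, and this does not follow formally from the commutation relation alone (that relation only yields $2x^2=0$). The paper proves it by observing that for a trihomogeneous element $\sum_i p_ie(\sigma_i;u_i)$ all summands have the same $G$-degree, so the relevant $\chi$-values equal $1$ and the cross terms anticommute on the nose, while the diagonal terms vanish because each $\sigma_i$ shares an index with itself. This argument needs to be included for the proof to be complete.
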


\begin{proof}
One needs to show that 
\begin{equation}\label{eq:ColorComm}
e(\sigma;u)e(\tau;v) = (-1)^{l_\sigma + l_\tau} \chi(x_\sigma * u, x_\tau * v)e(\tau;v)e(\sigma;u),
\end{equation}
where $l_\sigma$ is the length of $\sigma$ and $l_\tau$ is the length of $\tau$. Ignoring the signs, which are the same by the commutative case, this will follow provided
\begin{align*}
&C(x_{\sigma}*u,x_{\tau}*v) C\left(x_{\sigma}*x_{\tau}, \frac{u*v}{g(u*v)}\right)^{-1} \frac{u*v}{g(u*v)}\\
= &\chi(x_\sigma * u, x_\tau * v) C(x_{\tau}*v,x_{\sigma}*u) C\left(x_{\sigma}*x_{\tau}, \frac{u*v}{g(u*v)}\right)^{-1} \frac{u*v}{g(u*v)},
\end{align*}
which follows by repeated applications of \cref{eq:CChiRel}.

We now show that the square of a trihomogeneous element of odd homological degree is 0. Let 
\begin{equation}\label{eq:OddElement}
\sum_{i=1}^m p_ie(\sigma_i,u_i)
\end{equation}
be such an element, with $p_i\in R$. Being trihomogeneous implies that all the summands $p_ie(\sigma_i,u_i)$ have the same $G$-degree and therefore
\begin{equation}\label{eq:chi=1}
\chi(p_ie(\sigma_i,u_i),p_je(\sigma_j,u_j))=1,\quad\forall i,j.
\end{equation}
Since \cref{eq:OddElement} is odd, we deduce that $l_{\sigma_i}$ is odd for all $i$. Finally
\begin{equation}\label{eq:DoubleSum}
\left(\sum_{i=1}^m p_ie(\sigma_i,u_i)\right)^2=\sum_{i=1}^m\sum_{j=1}^mp_ie(\sigma_i,u_i)p_je(\sigma_j,u_j),
\end{equation}
by \cref{eq:ColorComm} and \cref{eq:chi=1} one has
\[
p_ie(\sigma_i,u_i)p_je(\sigma_j,u_j)=-p_je(\sigma_j,u_j)p_ie(\sigma_i,u_i),
\]
therefore, if the characteristic of $k$ is not 2, then the elements of \eqref{eq:DoubleSum} will cancel because each one appears twice but with opposite signs. If the characteristic of $k$ is 2, then each element of \eqref{eq:DoubleSum} appears twice, picking up 2 as a coefficient, giving zero as a result. This shows that the square is zero in any characteristic.
\end{proof}

\begin{lemma}
The product defined in \Cref{chk:DGA} is associative.
\end{lemma}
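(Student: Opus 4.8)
The plan is to verify associativity directly by computing both $\big(e(\sigma;u)e(\tau;v)\big)e(\rho;w)$ and $e(\sigma;u)\big(e(\tau;v)e(\rho;w)\big)$ and checking that the resulting scalar coefficients agree. First I would dispose of the degenerate cases: if any two of $\sigma,\tau,\rho$ share an index, then both triple products are $0$, since a shared index forces one of the pairwise products to vanish and the multiplication is $R$-bilinear. So I may assume $\sigma,\tau,\rho$ are pairwise disjoint. In that case both associations produce a scalar multiple of the same basis element $e(\sigma*\tau*\rho;\,g(u*v*w))$ times the monomial $\tfrac{u*v*w}{g(u*v*w)}$, so the content of the lemma is purely the equality of the accumulated scalar coefficients in $k^*$.

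The key step is to organize the scalar coefficients into three families and check each separately. The sign contributions $(-1)^{\inv(\cdot,\cdot)}$ combine to $(-1)^{\inv(\sigma*\tau,\rho)+\inv(\sigma,\tau)}$ on one side and $(-1)^{\inv(\sigma,\tau*\rho)+\inv(\tau,\rho)}$ on the other; both equal $(-1)^{\inv(\sigma,\tau)+\inv(\sigma,\rho)+\inv(\tau,\rho)}$ because $\inv$ is additive over concatenation in its disjoint arguments, so the signs match exactly as in the commutative case. The $\chi$-factors and the $C$-factors form the substantive part. Here I would repeatedly use that $\chi$ and $C$ are bicharacters (multiplicative in each slot), together with the relation $C(g(m),\tfrac{m}{g(m)})=1$ from Remark~\ref{rem:C=1} and the identity in \cref{eq:CChiRel}. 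The crucial simplification is that the decomposition function satisfies $g(u*g(v*w))=g(u*v*w)=g(g(u*v)*w)$ by Remark~\ref{rem:DecFun}(2), which guarantees that the basis symbol produced is the same regardless of the order of association; once this is known, the remaining task is to show that the telescoping products of $C$-factors attached to the various quotients $\tfrac{g(u*v)}{u}$, $\tfrac{g(u*v*w)}{g(u*v)}$, etc., recombine into the same total on both sides.

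The main obstacle I anticipate is bookkeeping the $C$-factors arising from the nested decompositions, since the two associations split the monomial $u*v*w$ through the intermediate monomials $g(u*v)$ and $g(v*w)$ respectively, and these intermediate monomials differ. I would handle this by writing every $C$-factor in fully expanded bicharacter form, substituting $\tfrac{g(u*v)}{u}$ and similar quotients as genuine elements of the monoid $(X,*)$, and then using multiplicativity to split each $C$ over numerator and denominator; at that point the relation $C(g(m),\tfrac{m}{g(m)})=1$ collapses the ``internal'' factors and the $\chi$-to-$C$ conversion reconciles the two orders. I expect that after this expansion the equality reduces to the single color-commutativity-type identity already used in the proof of \Cref{lem:ColorComm}, applied to the three-fold product, so the argument closes by the same bicharacter manipulations rather than by any new structural input.
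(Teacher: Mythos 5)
Your proposal follows essentially the same route as the paper's proof: expand both associations, observe that the basis element and monomial coefficient agree via Remark~\ref{rem:DecFun}(2), and then match the scalar coefficients by fully expanding all $C$- and $\chi$-factors as bicharacters and invoking \cref{eq:CChiRel} together with Remark~\ref{rem:C=1} to collapse the telescoping factors attached to $g(u*v)$ and $g(v*w)$. The tools you identify are exactly the ones the paper uses, so this is the same argument in outline.
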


\begin{proof}
As in the commutative case, the signs and the monomial coefficients will coincide, therefore in the remaining computations they will be ignored. If we expand $ \left( e(\sigma;u) e(\tau;v) \right) e(\rho;w)$, the basis element $e(\sigma * \tau *\rho; g(g(u*v)*w))$ occurs with scalar coefficient
\begin{align*}
    &C(x_{\sigma}*u,x_{\tau}*v)
    C\left( x_{\sigma}*x_{\tau},\frac{u*v}{g(u*v)} \right)^{-1}
    C(x_{\sigma}*x_{\tau}*g(u*v),x_{\rho}*w)
    \chi\left(\frac{u*v}{g(u*v)},x_{\rho}*w\right)\\
    &\cdot C\left( x_{\sigma}*x_{\tau}*x_{\rho}, \frac{g(u*v)*w}{g(g(u*v)*w)} \right)^{-1}
    C\left( \frac{g(u*v)*w}{g(g(u*v)*w)}, \frac{u*v}{g(u*v)} \right).
\end{align*}
After expanding using the property that $C$ and $\chi$ are bicharacters, we see that the following labeled factors cancel to 1
\begin{align*}
    &C(x_{\sigma},x_{\tau})
   \underbrace{C(x_{\sigma},v)}_{1}
    C(u,x_{\tau})
    C(u,v)
    C(x_{\sigma},u)^{-1}
    \underbrace{C(x_{\sigma},v)^{-1}}_{1}
    \underbrace{C(x_{\sigma},g(u*v))}_{2}
    C(x_{\tau},u)^{-1}
    C(x_{\tau},v)^{-1}\\
    &\cdot \underbrace{C(x_{\tau},g(u*v))}_{3}
    C(x_{\sigma},x_{\rho})
    \underbrace{C(x_{\sigma},w)}_{4}
    C(x_{\tau},x_{\rho})
    \underbrace{C(x_{\tau},w)}_{5}
    \underbrace{C(g(u*v),x_{\rho})}_{6}
    \underbrace{C(g(u*v),w)}_{7}
    C(u,x_{\rho})
    C(u,w)\\
    &\cdot C(v,x_{\rho})
     C(v,w)
    \underbrace{C(g(u*v),x_{\rho})^{-1}}_{6}
    \underbrace{C(g(u*v),w)^{-1}}_{7}
    C(x_{\rho},u)^{-1}
    \underbrace{C(w,u)^{-1}}_{8}
    C(x_{\rho},v)^{-1}
    \underbrace{C(w,v)^{-1}}_{9}\\
    &\cdot\underbrace{C(x_{\rho},g(u*v))}_{10}
     \underbrace{C(w,g(u*v))}_{11}
    \underbrace{C(x_{\sigma},g(u*v))^{-1}}_{2}
    \underbrace{C(x_{\sigma},w)^{-1}}_{4}
    C(x_{\sigma},g(g(u*v)*w))
    \underbrace{C(x_{\tau},g(u*v))^{-1}}_{3}\\
    &\cdot\underbrace{C(x_{\tau},w)^{-1}}_{5}
    C(x_{\tau},g(g(u*v)*w))
    \underbrace{C(x_{\rho},g(u*v))^{-1}}_{10}
    C(x_{\rho},w)^{-1}
    C(x_{\rho},g(g(u*v)*w))
    \cancelto{1}{C\left(g(u*v),\frac{u*v}{g(u*v)}\right)}\\
    &\cdot\underbrace{C(w,u)}_{8}
    \underbrace{C(w,v)}_{9}
    \underbrace{C(w,g(u*v))^{-1}}_{11}
    C(g(g(u*v)*w),u)^{-1}
    C(g(g(u*v)*w),v)^{-1}
    C(g(g(u*v)*w),g(u*v))
\end{align*}
and this expression is equal to:
\begin{align*}
    &C(x_{\sigma},x_{\tau})
    C(u,x_{\tau})
    C(u,v)
    C(x_{\sigma},u)^{-1}
    C(x_{\tau},u)^{-1}
    C(x_{\tau},v)^{-1}
    C(x_{\sigma},x_{\rho})
    C(x_{\tau},x_{\rho})
    C(u,x_{\rho})
    C(u,w)
    C(v,x_{\rho})\\
    &\cdot C(v,w)
    C(x_{\rho},u)^{-1}
    C(x_{\rho},v)^{-1}
    C(x_{\sigma},g(g(u*v)*w))
    C(x_{\tau},g(g(u*v)*w))
    C(x_{\rho},w)^{-1}
    C(x_{\rho},g(g(u*v)*w))\\
    &\cdot C(g(g(u*v)*w),u)^{-1}
    C(g(g(u*v)*w),v)^{-1}
    C(g(g(u*v)*w),g(u*v)).
\end{align*}
Similarly, if we expand $e(\sigma;u) \left( e(\tau;v) e(\rho;w) \right) $, the basis element $e(\sigma * \tau *\rho; g(u*g(v*w)))$ occurs with the following scalar coefficient
\begin{align*}
    &C(x_{\tau},x_{\rho})
    C(v,x_{\rho})
    C(v,w)
    C(x_{\tau},v)^{-1}
    C(x_{\rho},v)^{-1}
    C(x_{\rho},w)^{-1}
    C(x_{\sigma},x_{\tau})
    C(x_{\sigma},x_{\rho})
    C(u,x_{\tau})
    C(u,x_{\rho})
    C(x_{\sigma},u)^{-1}\\
    &\cdot C(x_{\sigma},g(u*g(v*w))
    C(x_{\tau},u)^{-1}
    C(x_{\tau},g(u*g(v*w)))
    C(x_{\rho},u)^{-1}
    C(x_{\rho},g(u*g(v*w)))
    C(u,v)
    C(u,w)\\
    &\cdot C(g(u*g(v*w)),v)^{-1}
    C(g(u*g(v*w)),w)^{-1}
    C(g(u*g(v*w)),g(v*w)).
\end{align*}
We notice that the following labeled factors in the first coefficient
\begin{align*}
    &\underbrace{C(x_{\sigma},x_{\tau})}_{12}
    \underbrace{C(u,x_{\tau})}_{13}
    \underbrace{C(u,v)}_{14}
    \underbrace{C(x_{\sigma},u)^{-1}}_{15}
    \underbrace{C(x_{\tau},u)^{-1}}_{16}
    \underbrace{C(x_{\tau},v)^{-1}}_{17}
    \underbrace{C(x_{\sigma},x_{\rho})}_{18}
    \underbrace{C(x_{\tau},x_{\rho})}_{19}
    \underbrace{C(u,x_{\rho})}_{20}
    \underbrace{C(u,w)}_{21}
    \underbrace{C(v,x_{\rho})}_{22}\\
    &\cdot\underbrace{C(v,w)}_{23}
    \underbrace{C(x_{\rho},u)^{-1}}_{24}
    \underbrace{C(x_{\rho},v)^{-1}}_{25}
    \underbrace{C(x_{\sigma},g(g(u*v)*w))}_{26}
    \underbrace{C(x_{\tau},g(g(u*v)*w))}_{27}
    \underbrace{C(x_{\rho},w)^{-1}}_{28}\\
    &\cdot\underbrace{C(x_{\rho},g(g(u*v)*w))}_{29}
    C(g(g(u*v)*w),u)^{-1}
    \underbrace{C(g(g(u*v)*w),v)^{-1}}_{30}
    C(g(g(u*v)*w),g(u*v))
\end{align*}
correspond to the following labeled factors in the second coefficient
\begin{align*}
    &\underbrace{C(x_{\tau},x_{\rho})}_{19}
    \underbrace{C(v,x_{\rho})}_{22}
    \underbrace{C(v,w)}_{23}
    \underbrace{C(x_{\tau},v)^{-1}}_{17}
    \underbrace{C(x_{\rho},v)^{-1}}_{25}
    \underbrace{C(x_{\rho},w)^{-1}}_{28}
    \underbrace{C(x_{\sigma},x_{\tau})}_{12}
    \underbrace{C(x_{\sigma},x_{\rho})}_{18}
    \underbrace{C(u,x_{\tau})}_{13}
    \underbrace{C(u,x_{\rho})}_{20}\\
    &\cdot \underbrace{C(x_{\sigma},u)^{-1}}_{15}
    \underbrace{C(x_{\sigma},g(u*g(v*w))}_{26}
    \underbrace{C(x_{\tau},u)^{-1}}_{16}
    \underbrace{C(x_{\tau},g(u*g(v*w)))}_{27}
    \underbrace{C(x_{\rho},u)^{-1}}_{24}
    \underbrace{C(x_{\rho},g(u*g(v*w)))}_{29}\\
    &\cdot \underbrace{C(u,v)}_{14}
    \underbrace{C(u,w)}_{21}
    \underbrace{C(g(u*g(v*w)),v)^{-1}}_{30}
    C(g(u*g(v*w)),w)^{-1}
    C(g(u*g(v*w)),g(v*w)).
\end{align*}
Thus it suffices to show that
\[
C(g(g(u*v)*w),u)^{-1} C(g(g(u*v)*w),g(u*v)) = C(g(u*g(v*w)),w)^{-1} C(g(u*g(v*w)),g(v*w)),  
\]
which follows via the computation
\begin{align*}
    1 &= C\left(g(u*g(v*w)), \frac{u*g(v*w)}{g(u*g(v*w))}\right)
    C\left(g(w*g(u*v)), \frac{w*g(u*v)}{g(w*g(u*v))}\right)^{-1}\textnormal{ by \Cref{rem:C=1}}\\
    &= C\left(g(u*v*w), \frac{u*g(v*w)}{g(u*v*w)}\right)
    C\left(g(u*v*w), \frac{w*g(u*v)}{g(u*v*w)}\right)^{-1} \textnormal{ by \Cref{rem:DecFun}(2)}\\
    &= C\left(g(u*v*w), \frac{\frac{u*g(v*w)}{g(u*v*w)}}{\frac{w*g(u*v)}{g(u*v*w)}}\right)\\
    &= C\left(g(u*v*w), \frac{u*g(v*w)}{w*g(u*v)}\right)\\
    &= C(g(u*v*w), u)
    C(g(u*v*w), g(v*w))
    C(g(u*v*w), w)^{-1}
    C(g(u*v*w), g(u*v))^{-1}\\
    &= C(g(g(u*v)*w), u)
    C(g(u*g(v*w)), g(v*w))
    C(g(u*g(v*w)), w)^{-1}
    C(g(g(u*v)*w), g(u*v))^{-1}.
\end{align*}
Finally, we notice that the basis elements $e(\sigma * \tau *\rho; g(g(u*v)*w))$ and $e(\sigma * \tau *\rho; g(u*g(v*w)))$ are the same by \Cref{rem:DecFun}(2).
\end{proof}

\begin{lemma}
The product defined in \Cref{chk:DGA} satisfies the Leibniz rule.
\end{lemma}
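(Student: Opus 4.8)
The plan is to reduce to basis elements and then match coefficients term by term, in exactly the style of the preceding two lemmas. Since the differential $d$ is right $R$-linear and the product of \Cref{thm:DGA} is compatible with the colored bimodule structure $r\cdot m=\chi(r,m)m\cdot r$, it suffices to verify the identity
\[
d(e(\sigma;u)e(\tau;v))=d(e(\sigma;u))e(\tau;v)+(-1)^{l_\sigma}e(\sigma;u)d(e(\tau;v))
\]
on pairs of admissible basis symbols $a=e(\sigma;u)$ and $b=e(\tau;v)$, where $l_\sigma$ denotes the length of $\sigma$; the general case then follows by moving scalars across the symbols and collecting the resulting $\chi$-factors via \cref{eq:CChiRel}.

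First I would dispose of the case in which $\sigma$ and $\tau$ share an index. If they share at least two indices, then every symbol occurring in $d(e(\sigma;u))$ still shares an index with $\tau$ and every symbol in $d(e(\tau;v))$ still shares an index with $\sigma$, so both sides vanish. If they share exactly one index $k$, then $ab=0$, so the left-hand side is $0$; on the right-hand side the only surviving summands are those deleting $k$ from $\sigma$ in $d(e(\sigma;u))e(\tau;v)$ and those deleting $k$ from $\tau$ in $e(\sigma;u)d(e(\tau;v))$. Using \Cref{rem:DecFun} to identify the generators produced by the two decomposition terms, one checks that these contributions cancel, so the right-hand side is $0$ as well.

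The main case is when $\sigma$ and $\tau$ are disjoint, so that $e(\sigma;u)e(\tau;v)$ is a scalar multiple of $e(\sigma*\tau;g(u*v))\,\frac{u*v}{g(u*v)}$. I would expand $d$ of this element and, separately, $d(e(\sigma;u))e(\tau;v)$ and $(-1)^{l_\sigma}e(\sigma;u)d(e(\tau;v))$, splitting each differential into its variable-multiplication part and its decomposition part as in the proof that $(L_\bullet(I),d)$ is a complex. Each symbol appearing on the left is obtained from $e(\sigma*\tau;g(u*v))$ by deleting a single index; the deletions removing an index of $\sigma$ are matched against $d(e(\sigma;u))e(\tau;v)$, and those removing an index of $\tau$ against $(-1)^{l_\sigma}e(\sigma;u)d(e(\tau;v))$. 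The signs agree exactly as in the commutative case treated in \cite[Remark 2]{EK}, so it remains to compare the $C$- and $\chi$-coefficients, which I would do by repeated use of \cref{eq:CChiRel}, \Cref{rem:C=1}, and the identity $g(v*g(w))=g(v*w)$ of \Cref{rem:DecFun}.

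The hard part will be the bookkeeping in this last case, for two reasons. First, the admissibility constraint encoded in the set $A(\sigma;u)$ must be shown to be compatible with merging $\sigma$ and $\tau$: a decomposition term of $d(e(\sigma*\tau;g(u*v)))$ is admissible precisely when the corresponding decomposition term of $d(e(\sigma;u))$ or of $d(e(\tau;v))$ is, and this relies on the monotonicity $\max(g(x_i*m))\le\max(g(m))$ extracted from \Cref{rem:DecFun}. Second, the decomposition terms force one to rewrite expressions such as $g(x_i*g(u*v))$ as $g(x_i*u*v)$ before the coefficients can be compared; keeping these $g$-normalizations aligned, together with the $\inv(\sigma,\tau)$ signs, is where essentially all the work lies, and it proceeds by the same pattern of labeled-factor cancellations used in the associativity lemma.
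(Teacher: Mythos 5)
Your proposal follows essentially the same route as the paper: the same case split on whether $\sigma$ and $\tau$ share an index, the same disposal of the two-or-more-shared-indices case, the same cancellation of the single-shared-index contributions, and in the disjoint case the same term-by-term pairing of index deletions from $\sigma*\tau$ with the two summands on the right-hand side, using \cref{eq:CChiRel}, \Cref{rem:C=1} and \Cref{rem:DecFun}. The only caveat is that the explicit cancellation of the $C$- and $\chi$-factors --- which is where essentially all of the paper's proof lives --- is asserted rather than carried out, though you correctly identify the identities and the $g$-normalizations that make it work.
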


\begin{proof} We want to show that
\begin{align*}
    d(e(\sigma;u)e(\tau;v)) = d(e(\sigma;u))e(\tau;v) + (-1)^{|e(\sigma;u)|} e(\sigma;u)d(e(\tau;v)).
\end{align*}
We split the proof into two cases.

\noindent
\textbf{Case 1}: We assume that $\sigma$ and $\tau$ have an index in common.

We want to show that $d(e(\sigma;u))e(\tau;v) + (-1)^{|e(\sigma;u)|} e(\sigma;u)d(e(\tau;v)) = 0$. If $\sigma$ and $\tau$ have more than one index in common, then what we want to show is clear. Suppose $\sigma = (i_{1}, i_{2}, \dots i_{p})$, $\tau = (j_{1}, j_{2}, \dots j_{q})$ and $\sigma$ and $\tau$ have exactly one index in common. Then $i_{r} = j_{s}$ for some $r$ and $s$. In this case, the only nontrivial product in $d(e(\sigma;u))e(\tau;v)$ occurs when $i_r$ is removed from the lefthand factor, resulting in the product
\begin{align*}
    &\left[
    e(\sigma_{r};u)
    (-1)^{r}
    C(x_{\sigma_{r}}*u,x_{i_{r}})^{-1}
    x_{i_{r}}
    -
    e(\sigma_{r};u_{r})
    (-1)^{r}
    C(x_{\sigma_{r}}, y_{r})^{-1}
    y_{r}
    \right]
    e(\tau;v),
\end{align*}
which, by \Cref{lem:ColorComm}, is equal to
\begin{align*}
    &e(\sigma_{r};u)
    e(\tau;v)
    (-1)^{r}
    \chi(x_{i_{r}},x_{\tau}*v)
    C(x_{\sigma_{r}}*u,x_{i_{r}})^{-1}
    x_{i_{r}}
    -
    e(\sigma_{r};u_{r})
    e(\tau;v)
    (-1)^{r}
    \chi(y_{r},x_{\tau}*v)
    C(x_{\sigma_{r}}, y_{r})^{-1}
    y_{r}.
\end{align*}
Using the formula for the product given in \Cref{chk:DGA} we see that this last expression is equal to
\begin{align}\label{eq:Prod1}
    &(-1)^{r+\inv(\sigma_r, \tau)}
    e(\sigma_{r}*\tau;g(u*v))
    C(x_{\sigma_r}*u,x_{\tau}*v)
    C\left( x_{\sigma_r}*x_{\tau},\frac{u*v}{g(u*v)} \right)^{-1}\\
    &\cdot \chi(x_{i_{r}},x_{\tau}*v)
    C(x_{\sigma_{r}}*u,x_{i_{r}})^{-1}
    C\left(\frac{u*v}{g(u*v)},x_{i_{r}}\right)
    \frac{u*v}{g(u*v)}*x_{i_{r}} \nonumber\\
    - &(-1)^{r+\inv(\sigma_r,\tau)}
    e(\sigma_{r}*\tau;g(u_{r}*v))
    C(x_{\sigma_r}*u_r,x_{\tau}*v) C\left(x_{\sigma_r}*x_{\tau},\frac{u_r*v}{g(u_r*v)}\right)^{-1}\nonumber\\
    &\cdot \chi(y_{r},x_{\tau}*v)
    C(x_{\sigma_{r}}, y_{r})^{-1}
    C\left(\frac{u_{r}*v}{g(u_{r}*v)}, y_{r} \right)
    \frac{u_{r}*v}{g(u_{r}*v)}*y_{r}.\nonumber
\end{align}
Likewise, the only nontrivial product in $e(\sigma;u)d(e(\tau;v))$ occurs when $j_s$ is removed from the righthand factor. Proceeding as before we obtain
\begin{align}\label{eq:Prod2}
    &(-1)^{s+\inv(\sigma,\tau_s)} 
    e(\sigma*\tau_{s};g(u*v))
    C(x_{\sigma}*u,x_{\tau_s}*v)
    C\left( x_{\sigma}*x_{\tau_s},\frac{u*v}{g(u*v)} \right)^{-1}\\
    &\cdot C(x_{\tau_{s}}*v,x_{j_s})^{-1}
    C\left(\frac{u*v}{g(u*v)}, x_{j_s}\right)
    \frac{u*v}{g(u*v)}*x_{j_s} \nonumber\\
    - &(-1)^{s+\inv(\sigma,\tau_s)}
    e(\sigma*\tau_{s};g(u*v_{s}))
    C(x_{\sigma}*u,x_{\tau_s}*v_s) C\left(x_{\sigma}*x_{\tau_s},\frac{u*v_s}{g(u*v_s)}\right)^{-1}\nonumber\\
    &\cdot C(x_{\tau_{s}}, y_{s})^{-1}
    C\left(\frac{u*v_{s}}{g(u*v_{s})}, y_{s}\right)
    \frac{u*v_{s}}{g(u*v_{s})}*y_{s}.\nonumber
\end{align}
Notice that $e(\sigma_{r}*\tau;g(u_r*v)) =  e(\sigma*\tau_{s};g(u*v_s))$, since $i_{r}=j_{s}$ implies that $x_{i_{r}} = x_{j_s}$, $\sigma_{r}*\tau = \sigma*\tau_{s}$, and 
\begin{align*}
    g(u_{r}*v) = g(g(x_{i_{r}}*u)*v) = g(x_{i_{r}}*u*v) = g(x_{j_s}*u*v) = g(u*x_{j_s}*v) = g(u*g(x_{j_s}*v)) = g(u*v_{s}).
\end{align*}
As in the commutative case, see \cite[Theorem 2.1]{EK}, it follows that the signs coincide, therefore they will be ignored in the following computations. We start by showing that the scalar coefficient of $e(\sigma_{r}*\tau;g(u_{r}*v))$ in \cref{eq:Prod1} coincides with the scalar coefficient of $e(\sigma*\tau_{s};g(u*v_{s}))$ in \cref{eq:Prod2}. After expanding the scalar coefficient of $e(\sigma_{r}*\tau;g(u_{r}*v))$ using the property that $C$ and $\chi$ are bicharacters and rewriting $x_{\sigma_r} = \frac{x_{\sigma}}{x_{i_r}}$ and $y_{r} = \frac{x_{i_{r}}*u}{u_{r}}$, we see that the following labeled factors cancel to 1: 
\begin{align*}
    & C(x_{\sigma}x_{\tau}) 
    \underbrace{C(x_{i_r},x_{\tau})^{-1}}_{1} 
    \underbrace{C(u_r,x_{\tau})}_{2} 
    \underbrace{C(x_{\sigma},v)}_{3} 
    \underbrace{C(x_{i_r},v)^{-1}}_{4} 
    \underbrace{C(u_r,v)}_{5} 
    C(x_{\sigma},g(u_r*v)) 
    C(x_{i_r},g(u_r*v))^{-1}\\
    &\cdot \underbrace{C(x_{\sigma},u_r)^{-1}}_{6}
    \underbrace{C(x_{i_r},u_r)}_{7}
    \underbrace{C(x_{\sigma},v)^{-1}}_{3}
    \underbrace{C(x_{i_r},v)}_{4}
    C(x_{\tau},g(u_r*v))
    \underbrace{C(x_{\tau},u_r)^{-1}}_{8}
    C(x_{\tau},v)^{-1}
    \underbrace{C(x_{i_r},x_{\tau})}_{1}
    C(u,x_{\tau})\\
    &\cdot \underbrace{C(u_r,x_{\tau})^{-1}}_{2}
    C(x_{i_r},v)
    C(u,v)
    \underbrace{C(u_r,v)^{-1}}_{5}
    C(x_{\tau},x_{i_r})^{-1}
    C(x_{\tau},u)^{-1}
    \underbrace{C(x_{\tau},u_r)}_{8}
    \underbrace{C(v,y_r)^{-1}}_{9}
    \cancelto{1}{C(u_r,y_r)}\\
    &\cdot C(x_{\sigma},x_{i_r})^{-1}
    C(x_{\sigma},u)^{-1}
    \underbrace{C(x_{\sigma},u_r)}_{6}
    \cancelto{1}{C(x_{i_r},x_{i_r})}
    C(x_{i_r},u)
    \underbrace{C(x_{i_r},u_r)^{-1}}_{7}
    \cancelto{1}{C(u_r,y_r)}
    \underbrace{C(v,y_r)}_{9}\\
    &\cdot C(g(u_r*v),x_{i_r})^{-1}
    C(g(u_r*v),u)^{-1}
    C(g(u_r*v),u_r)
\end{align*}
and this expression is equal to:
\small
\begin{align}\label{eq:Prod3}
    & C(x_{\sigma}x_{\tau}) 
    C(x_{\sigma},g(u_r*v)) 
    C(x_{i_r},g(u_r*v))^{-1}
    C(x_{\tau},g(u_r*v))
    C(x_{\tau},v)^{-1}
    C(u,x_{\tau})
    C(x_{i_r},v)
    C(u,v)
    C(x_{\tau},x_{i_r})^{-1}\\
    &\cdot C(x_{\tau},u)^{-1}
    C(x_{\sigma},x_{i_r})^{-1}
    C(x_{\sigma},u)^{-1}
    C(x_{i_r},u)
    C(g(u_r*v),x_{i_r})^{-1}
    C(g(u_r*v),u)^{-1}
    C(g(u_r*v),u_r)\nonumber.
\end{align}
\normalsize
Likewise, rewriting $x_{\tau_s}=\frac{x_{\tau}}{x_{j_s}}$ and $y_s=\frac{x_{j_s}*v}{v_s}$, it follows that the scalar coefficient of $e(\sigma*\tau_{s};g(u*v_{s}))$ is
\small
\begin{align}\label{eq:Prod4}
    &C(x_{\sigma},x_{\tau})
    C(x_{\sigma},x_{i_r})^{-1}
    C(u,x_{\tau})
    C(x_{\sigma},g(u*v_s))
    C(x_{\sigma},u)^{-1}
    C(x_{\tau},g(u*v_s))
    C(x_{j_s},g(u*v_s))^{-1}
    C(x_{\tau},u)^{-1}\\
    &\cdot C(x_{j_s},u)
    C(x_{\tau},x_{j_s})^{-1}
    C(x_{\tau},v)^{-1}
    C(x_{j_s},v)
    C(u,v)
    C(g(u*v_s),x_{j_s})^{-1}
    C(g(u*v_s),v)^{-1}
    C(g(u*v_s),v_s).\nonumber
\end{align}
\normalsize
We notice that the following labeled factors in \eqref{eq:Prod3}
\small
\begin{align*}
    & \underbrace{C(x_{\sigma}x_{\tau})}_{10}
    \underbrace{C(x_{\sigma},g(u_r*v)) }_{11}
    \underbrace{C(x_{i_r},g(u_r*v))^{-1}}_{12}
    \underbrace{C(x_{\tau},g(u_r*v))}_{13}
    \underbrace{C(x_{\tau},v)^{-1}}_{14}
    \underbrace{C(u,x_{\tau})}_{15}
    \underbrace{C(x_{i_r},v)}_{16}
    \underbrace{C(u,v)}_{17}
    \underbrace{C(x_{\tau},x_{i_r})^{-1}}_{18}\\
    &\cdot \underbrace{C(x_{\tau},u)^{-1}}_{19}
    \underbrace{C(x_{\sigma},x_{i_r})^{-1}}_{20}
    \underbrace{C(x_{\sigma},u)^{-1}}_{21}
    \underbrace{C(x_{i_r},u)}_{22}
    \underbrace{C(g(u_r*v),x_{i_r})^{-1}}_{23}
    C(g(u_r*v),u)^{-1}
    C(g(u_r*v),u_r)
\end{align*}
\normalsize
correspond to the following labeled factors in \cref{eq:Prod4}:
\small
\begin{align*}
    & \underbrace{C(x_{\sigma},x_{\tau})}_{10}
    \underbrace{C(x_{\sigma},x_{i_r})^{-1}}_{20}
    \underbrace{C(u,x_{\tau})}_{15}
    \underbrace{C(x_{\sigma},g(u*v_s))}_{11}
    \underbrace{C(x_{\sigma},u)^{-1}}_{21}
    \underbrace{C(x_{\tau},g(u*v_s))}_{13}
    \underbrace{C(x_{j_s},g(u*v_s))^{-1}}_{12}
    \underbrace{C(x_{\tau},u)^{-1}}_{19}\\
    &\cdot \underbrace{C(x_{j_s},u)}_{22}
    \underbrace{C(x_{\tau},x_{j_s})^{-1}}_{18}
    \underbrace{C(x_{\tau},v)^{-1}}_{14}
    \underbrace{C(x_{j_s},v)}_{16}
    \underbrace{C(u,v)}_{17}
    \underbrace{C(g(u*v_s),x_{j_s})^{-1}}_{23}
    C(g(u*v_s),v)^{-1}
    C(g(u*v_s),v_s).
\end{align*}
\normalsize
Thus it remains to show that 
\[
C(g(u_r*v),u)^{-1} C(g(u_r*v),u_r) = C(g(u*v_s),v)^{-1} C(g(u*v_s),v_s),
\]
which follows since 
\begin{align*}
    1 &= C\left(g(u_{r}*v), \frac{u_{r}*v}{g(u_{r}*v)}\right)
    C\left(g(u*v_{s}), \frac{u*v_{s}}{g(u*v_{s})}\right)^{-1} \textnormal{ by \cref{rem:C=1}}\\
    &= C\left(g(g(x_{i_{r}}*u)*v), \frac{u_{r}*v}{g(u_{r}*v)}\right)
    C\left(g(u*g(x_{j_s}*v)), \frac{u*v_{s}}{g(u*v_{s})}\right)^{-1}\\
    &= C\left(g(x_{i_{r}}*u*v), \frac{u_{r}*v}{g(u_{r}*v)}\right)
    C\left(g(x_{j_s}*u*v), \frac{u*v_{s}}{g(u*v_{s})}\right)^{-1} \textnormal{ by \cref{rem:DecFun}(2)}\\
    &= C\left(g(x_{i_{r}}*u*v), \frac{\frac{u_{r}*v}{g(u_{r}*v)}}{\frac{u*v_{s}}{g(u*v_{s})}}\right)\\
    &= C\left(g(x_{i_{r}}*u*v), \frac{u_{r}*v}{u*v_{s}}\right)\\
    &= C(g(x_{i_{r}}*u*v), u_{r})
    C(g(x_{i_{r}}*u*v), v)
    C(g(x_{i_{r}}*u*v), u)^{-1}
    C(g(x_{i_{r}}*u*v), v_{s})^{-1}\\
    &= C(g(u_{r}*v), u_{r})
    C(g(u*v_{s}), v)
    C(g(u_{r}*v), u)^{-1}
    C(g(u*v_{s}), v_{s})^{-1}.
\end{align*}
The proof that the scalar coefficient of $e(\sigma_{r}*\tau;g(u*v))$ in \cref{eq:Prod1} coincides with the scalar coefficient of $e(\sigma*\tau_{s};g(u*v))$ in \cref{eq:Prod2} is similar and is therefore omitted.

\textbf{Case 2}: We assume that $\sigma$ and $\tau$ do not have an index in common.

Recall that we want to show
\begin{equation}\label{eq:Leibniz}
d(e(\sigma;u)e(\tau;v)) = d(e(\sigma;u))e(\tau;v) + (-1)^{|e(\sigma;u)|} e(\sigma;u)d(e(\tau;v)).
\end{equation}
When we expand $d(e(\sigma;u)e(\tau;v))$, we get 

\begin{align}\label{eq:DiffProd}
    \!\!\!\!\!\!\!\!\!\!\!\!\!\!\!\!\!\!\!\!d\left(e(\sigma*\tau; g(u*v)) 
    \chi(u,x_{\tau}) 
    C(x_{\sigma}, x_{\tau})
    C(u,v)
    C\left( x_{\sigma}, \frac{g(u*v)}{u} \right)
    C\left( x_{\tau}, \frac{g(u*v)}{v} \right)
    \frac{u*v}{g(u*v)}
    \right)
\end{align}
\begin{align}
    = &C(x_{\sigma}*u,x_{\tau}*v)
    C\left(x_{\sigma}*x_{\tau},\frac{u*v}{g(u*v)}\right)^{-1}
    d(e(\sigma*\tau; g(u*v)))
    \frac{u*v}{g(u*v)}\nonumber\\
    = &C(x_{\sigma}*u,x_{\tau}*v)
    C\left(x_{\sigma}*x_{\tau},\frac{u*v}{g(u*v)}\right)^{-1}
    \sum_{r=1}^{l_{\sigma}+l_{\tau}}
    (-1)^{r}
    e((\sigma*\tau)_{r};g(u*v))\nonumber\\
    &\cdot C\left(x_{(\sigma*\tau)_{r}}*g(u*v), \frac{x_{\sigma*\tau}}{x_{(\sigma*\tau)_{r}}}\right)^{-1}
    C\left(\frac{x_{\sigma*\tau}}{x_{(\sigma*\tau)_{r}}}, \frac{u*v}{g(u*v)} \right)
    \frac{x_{\sigma*\tau}}{x_{(\sigma*\tau)_{r}}}*
    \frac{u*v}{g(u*v)}\nonumber\\
    - &C(x_{\sigma}*u,x_{\tau}*v)     C\left(x_{\sigma}*x_{\tau},\frac{u*v}{g(u*v)}\right)^{-1}
    \sum_{r=1}^{l_{\sigma}+l_{\tau}}
    (-1)^{r}
    e\left((\sigma*\tau)_{r};g\left(\frac{x_{\sigma*\tau}}{x_{(\sigma*\tau)_{r}}}*g(u*v)\right)\right)\nonumber\\
    &\cdot C\left(x_{(\sigma*\tau)_{r}}, \frac{\frac{x_{\sigma*\tau}}{x_{(\sigma*\tau)_{r}}}*g(u*v)}{g\left(\frac{x_{\sigma*\tau}}{x_{(\sigma*\tau)_{r}}}*g(u*v)\right)}\right)^{-1}
    C\left(\frac{\frac{x_{\sigma*\tau}}{x_{(\sigma*\tau)_{r}}}*g(u*v)}{g\left(\frac{x_{\sigma*\tau}}{x_{(\sigma*\tau)_{r}}}*g(u*v)\right)}, \frac{u*v}{g(u*v)} \right)\\
    &\frac{\frac{x_{\sigma*\tau}}{x_{(\sigma*\tau)_{r}}}*g(u*v)}{g\left(\frac{x_{\sigma*\tau}}{x_{(\sigma*\tau)_{r}}}*g(u*v)\right)}
    *\frac{u*v}{g(u*v)}\nonumber
\end{align}
where $l_{\sigma}$, $l_{\tau}$ are the lengths of $\sigma$ and $\tau$, respectively. Now we expand the righthand side of \cref{eq:Leibniz}. Expanding $d(e(\sigma;u))$ gives
\begin{align*}
    &\sum_{r=1}^{l_{\sigma}}
    e(\sigma_{r};u)
    (-1)^{r}
    C\left(x_{\sigma_{r}}*u, \frac{x_{\sigma}}{x_{\sigma_{r}}}\right)^{-1}
    \frac{x_{\sigma}}{x_{\sigma_{r}}}
    - 
    \sum_{r=1}^{l_{\sigma}}
    (-1)^{r}
    e\left(\sigma_{r};g\left(\frac{x_{\sigma}}{x_{\sigma_{r}}}*u\right)\right)
    C\left(x_{\sigma_{r}},
    \frac{\frac{x_{\sigma}}{x_{\sigma_{r}}}*u}{g\left(\frac{x_{\sigma}}{x_{\sigma_{r}}}*u\right)}\right)^{-1}
    \frac{\frac{x_{\sigma}}{x_{\sigma_{r}}}*u}{g\left(\frac{x_{\sigma}}{x_{\sigma_{r}}}*u\right)}
\end{align*}
and after multiplying by the appropriate scalars to move $e(\tau;v)$ past the monomial coefficients, multiplying the equation above by $e(\tau;v)$ on the right gives
\begin{align*}
    &\sum_{r=1}^{l_{\sigma}}
    e(\sigma_{r};u)
    e(\tau;v)
    (-1)^{r}
    C\left(x_{\sigma_{r}}*u, \frac{x_{\sigma}}{x_{\sigma_{r}}}\right)^{-1}
    \chi\left(\frac{x_{\sigma}}{x_{\sigma_{r}}}, x_{\tau}*v\right)
    \frac{x_{\sigma}}{x_{\sigma_{r}}}\\
    - &\sum_{r=1}^{l_{\sigma}}
    (-1)^{r}
    e\left(\sigma_{r};g\left(\frac{x_{\sigma}}{x_{\sigma_{r}}}*u\right)\right)
    e(\tau;v)
    C\left(x_{\sigma_{r}},
    \frac{\frac{x_{\sigma}}{x_{\sigma_{r}}}*u}{g(\frac{x_{\sigma}}{x_{\sigma_{r}}}*u)}\right)^{-1}
    \chi\left(\frac{\frac{x_{\sigma}}{x_{\sigma_{r}}}*u}{g(\frac{x_{\sigma}}{x_{\sigma_{r}}}*u)}, x_{\tau}*v\right)
    \frac{\frac{x_{\sigma}}{x_{\sigma_{r}}}*u}{g(\frac{x_{\sigma}}{x_{\sigma_{r}}}*u)}.
\end{align*}
Using the formula for the product, the previous display becomes
\begin{align}\label{eq:ProdDiff1}
    &\sum_{r=1}^{l_{\sigma}}
    (-1)^{r}
    e(\sigma_{r}*\tau;g(u*v))
    \chi(u,x_{\tau})
    C(x_{\sigma_{r}},x_{\tau})
    C(u,v)
    C\left( x_{\sigma_{r}}, \frac{g(u*v)}{u} \right)
    C\left( x_{\tau}, \frac{g(u*v)}{v} \right)\\
    &\cdot C\left(x_{\sigma_{r}}*u, \frac{x_{\sigma}}{x_{\sigma_{r}}}\right)^{-1}
    \chi\left(\frac{x_{\sigma}}{x_{\sigma_{r}}}, x_{\tau}*v\right)
    C\left(\frac{u*v}{g(u*v)}, \frac{x_{\sigma}}{x_{\sigma_{r}}}\right)
    \frac{u*v}{g(u*v)}*
    \frac{x_{\sigma}}{x_{\sigma_{r}}}\nonumber
\end{align}
\begin{align*}
    \hspace{0.45in} -&\sum_{r=1}^{l_{\sigma}}
    (-1)^{r}
    e\left(\sigma_{r}*\tau;g\left(g\left(\frac{x_{\sigma}}{x_{\sigma_{r}}}*u\right)*v\right)\right)
    \chi\left(g\left(\frac{x_{\sigma}}{x_{\sigma_{r}}}*u\right), x_{\tau}\right)
    C(x_{\sigma_{r}}, x_{\tau})
    C\left(g\left(\frac{x_{\sigma}}{x_{\sigma_{r}}}*u\right), v\right)\nonumber\\
    &\cdot C\left( x_{\sigma_{r}}, \frac{g(g(\frac{x_{\sigma}}{x_{\sigma_{r}}}*u)*v)}{g(\frac{x_{\sigma}}{x_{\sigma_{r}}}*u)}
    \right)
    C\left( x_{\tau}, \frac{g(g(\frac{x_{\sigma}}{x_{\sigma_{r}}}*u)*v)}{v} \right)
    C\left(x_{\sigma_{r}},
    \frac{\frac{x_{\sigma}}{x_{\sigma_{r}}}*u}{g(\frac{x_{\sigma}}{x_{\sigma_{r}}}*u)}\right)^{-1}\\
    &\cdot \chi\left(\frac{\frac{x_{\sigma}}{x_{\sigma_{r}}}*u}{g(\frac{x_{\sigma}}{x_{\sigma_{r}}}*u)}, x_{\tau}*v\right)\nonumber
    C\left( \frac{g(\frac{x_{\sigma}}{x_{\sigma_{r}}}*u)*v}{g(g(\frac{x_{\sigma}}{x_{\sigma_{r}}}*u)*v)}, \frac{\frac{x_{\sigma}}{x_{\sigma_{r}}}*u}{g(\frac{x_{\sigma}}{x_{\sigma_{r}}}*u)}
    \right)
    \frac{g(\frac{x_{\sigma}}{x_{\sigma_{r}}}*u)*v}{g(g(\frac{x_{\sigma}}{x_{\sigma_{r}}}*u)*v)}*
    \frac{\frac{x_{\sigma}}{x_{\sigma_{r}}}*u}{g(\frac{x_{\sigma}}{x_{\sigma_{r}}}*u)}\nonumber.
\end{align*}
There are $l_{\sigma}$ choices of $r$ for which $(\sigma*\tau)_{r} = \sigma_{r}*\tau$. At these $r$, the basis elements $e((\sigma*\tau)_{r};g(u*v))$ in \eqref{eq:DiffProd} and $e(\sigma_{r}*\tau;g(u*v))$ in \cref{eq:ProdDiff1} coincide. We check that the monomials coefficients coincide. The basis element $e((\sigma*\tau)_{r};g(u*v))$ from \Cref{eq:DiffProd} is multiplied by the monomial
\begin{align*}
    \frac{x_{\sigma*\tau}}{x_{(\sigma*\tau)_{r}}} *
    \frac{u*v}{g(u*v)},
\end{align*}
which, by our choice of $r$, is equal to
\begin{align*}
\frac{u*v}{g(u*v)}*
    \frac{x_{\sigma}}{x_{\sigma_{r}}}
\end{align*}
which is the monomial coefficient of $e(\sigma_{r}*\tau;g(u*v))$ in \cref{eq:ProdDiff1}. Now we check that the scalar coefficients coincide as well. After using the property that $C$ and $\chi$ are bicharacters and rewriting $x_{\sigma*\tau} = x_{\sigma}*x_{\tau}$ and $x_{(\sigma*\tau)_{r}} = x_{\sigma_{r}}*x_{\tau}$, the scalar coefficient of $e((\sigma*\tau)_{r};g(u*v))$ from \Cref{eq:DiffProd} is
\begin{align*}
    &C(x_\sigma,x_\tau)
    \underbrace{C(x_\sigma,v)}_{1}
    C(u,x_\tau*v)
    \underbrace{C(x_\sigma,u)^{-1}}_{2}
    \underbrace{C(x_\sigma,v)^{-1}}_{1}
    \underbrace{C(x_\sigma,g(u*v))}_{3}
    \underbrace{C\left(x_\tau,\frac{u*v}{g(u*v)}\right)^{-1}}_{4}
    C(x_{\sigma_{r}},x_{\sigma})^{-1}\\
    &\cdot\underbrace{C(x_{\sigma_{r}},x_{\tau})^{-1}}_{5}
    C(x_{\tau},x_{\sigma})^{-1}
    \underbrace{C(x_{\tau},x_{\tau})^{-1}}_{6}
    C(x_{\sigma_{r}},x_{\sigma_{r}})
    \underbrace{C(x_{\sigma_{r}},x_{\tau})}_{5}
    C(x_{\tau},x_{\sigma_{r}})
    \underbrace{C(x_{\tau},x_{\tau})}_{6}
    C(g(u*v),x_{\sigma})^{-1}\\
    &\cdot\underbrace{C(g(u*v),x_{\tau})^{-1}}_{7}
    C(g(u*v), x_{\sigma_{r}})
    \underbrace{C(g(u*v), x_{\tau})}_{7}
    \underbrace{C(x_{\sigma},u)}_{2}
    C(x_{\sigma},v)
    \underbrace{C(x_{\sigma}, g(u*v))^{-1}}_{3}
    \underbrace{C\left(x_{\tau},\frac{u*v}{g(u*v)}\right)}_{4}\\
    &\cdot C(x_{\sigma_{r}},u)^{-1}
    C(x_{\sigma_{r}},v)^{-1}
    C(x_{\sigma_{r}},g(u*v))
    C\left(x_{\tau},\frac{u*v}{g(u*v)}\right)^{-1}.
\end{align*}
The factors with the same label are inverses of each other; once simplified the previous display reduces to
\begin{align}\label{eq:Prod7}
    &C(x_\sigma,x_\tau)
    C(u,x_\tau*v)
    C(x_{\sigma_{r}},x_{\sigma})^{-1}
    C(x_{\tau},x_{\sigma})^{-1}
    C(x_{\sigma_{r}},x_{\sigma_{r}})
    C(x_{\tau},x_{\sigma_{r}})
    C(g(u*v),x_{\sigma})^{-1}
    C(g(u*v), x_{\sigma_{r}})\\
    &\cdot C(x_{\sigma},v)
    C(x_{\sigma_{r}},u)^{-1}
    C(x_{\sigma_{r}},v)^{-1}
    C(x_{\sigma_{r}},g(u*v))
    C\left(x_{\tau},\frac{u*v}{g(u*v)}\right)^{-1}.\nonumber
\end{align}
Similarly, the scalar coefficient of $e(\sigma_{r}*\tau;g(u*v))$ from \cref{eq:ProdDiff1} simplifies to
\begin{align}\label{eq:Prod8}
    &C(u,x_\tau*v)
    C(x_{\sigma_r},u)^{-1}
    C(x_{\sigma_r},g(u*v))
    C\left(x_\tau,\frac{u*v}{g(u*v)}\right)^{-1}
    C(x_{\sigma_{r}}, x_{\sigma})^{-1}
    C(x_{\sigma_{r}}, x_{\sigma_{r}})\\
    &\cdot C(x_{\sigma},x_{\tau})
    C(x_{\tau},x_{\sigma})^{-1}
    C(x_{\sigma},v)
    C(x_{\tau},x_{\sigma_r})
    C(x_{\sigma_r},v)^{-1}
    C(g(u*v), x_{\sigma})^{-1}
    C(g(u*v), x_{\sigma_{r}}).\nonumber
\end{align}
We notice that the following labeled factors in \cref{eq:Prod7}
\begin{align*}
    &\underbrace{C(x_\sigma,x_\tau)}_{8}
    \underbrace{C(u,x_\tau*v)}_{9}
    \underbrace{C(x_{\sigma_{r}},x_{\sigma})^{-1}}_{10}
    \underbrace{C(x_{\tau},x_{\sigma})^{-1}}_{11}
    \underbrace{C(x_{\sigma_{r}},x_{\sigma_{r}})}_{12}
    \underbrace{C(x_{\tau},x_{\sigma_{r}})}_{13}
    \underbrace{C(g(u*v),x_{\sigma})^{-1}}_{14}
    \underbrace{C(g(u*v), x_{\sigma_{r}})}_{15}\\
    &\cdot \underbrace{C(x_{\sigma},v)}_{16}
    \underbrace{C(x_{\sigma_{r}},u)^{-1}}_{17}
    \underbrace{C(x_{\sigma_{r}},v)^{-1}}_{18}
    \underbrace{C(x_{\sigma_{r}},g(u*v))}_{19}
    \underbrace{C\left(x_{\tau},\frac{u*v}{g(u*v)}\right)^{-1}}_{20}
\end{align*}
correspond to the following labeled factors in \cref{eq:Prod8}
\begin{align*}
    &\underbrace{C(u,x_\tau*v)}_{9}
    \underbrace{C(x_{\sigma_r},u)^{-1}}_{17}
    \underbrace{C(x_{\sigma_r},g(u*v))}_{19}
    \underbrace{C\left(x_\tau,\frac{u*v}{g(u*v)}\right)^{-1}}_{20}
    \underbrace{C(x_{\sigma_{r}}, x_{\sigma})^{-1}}_{10}
    \underbrace{C(x_{\sigma_{r}}, x_{\sigma_{r}})}_{12}\\
    &\cdot \underbrace{C(x_{\sigma},x_{\tau})}_{8}
    \underbrace{C(x_{\tau},x_{\sigma})^{-1}}_{11}
    \underbrace{C(x_{\sigma},v)}_{16}
    \underbrace{C(x_{\tau},x_{\sigma_r})}_{13}
    \underbrace{C(x_{\sigma_r},v)^{-1}}_{18}
    \underbrace{C(g(u*v), x_{\sigma})^{-1}}_{14}
    \underbrace{C(g(u*v), x_{\sigma_{r}})}_{15}.
\end{align*}
Hence the coefficient of $e((\sigma*\tau)_{r};g(u*v))$ in \cref{eq:DiffProd} is the same as the coefficient of $e(\sigma_{r}*\tau;g(u*v))$ in \cref{eq:ProdDiff1} for the $r$ for which $(\sigma*\tau)_r=\sigma_r*\tau$. By a similar argument, for these same $r$, the coefficient of $e\left((\sigma*\tau)_{r};g\left( \frac{x_{\sigma*\tau}}{x_{(\sigma*\tau)_{r}}}*g(u*v)\right) \right)$ in \cref{eq:DiffProd} and the coefficient of  $e\left(\sigma_{r}*\tau;g\left(g\left(\frac{x_{\sigma}}{x_{\sigma_{r}}}*u\right)*v\right)\right)$ from \cref{eq:ProdDiff1} coincide.

It remains to show that for the $r$ for which $(\sigma*\tau)_r=\sigma*\tau_r$ the coefficients of $e((\sigma*\tau)_r,g(u*v))$ and $e\left((\sigma*\tau)_{r};g\left(\frac{x_{\sigma*\tau}}{x_{(\sigma*\tau)_{r}}}*g(u*v)\right)\right)$ in \Cref{eq:DiffProd} are the same as the coefficients of $e(\sigma*\tau_r,g(u*v))$ and $e\left(\sigma*\tau_r,g\left(u*g\left(\frac{x_\tau}{x_{\tau_r}}*v\right)\right)\right)$, respectively, in the expansion of $(-1)^{|e(\sigma;u)|}e(\sigma,u)d(e(\tau,v))$. This is a long and tedious computation analogous to the one just performed, and it is therefore omitted. \qedhere
\end{proof}

\begin{example}
The multiplication table of the skew Eliahou-Kervaire resolution of the ideal considered in \cref{ex:Example} is

\begin{center}
\def\arraystretch{1.25}
\begin{tabular}{|c||c|c|}
\hline
& $e(1;xy)$ & $e(1;y^2)$\\
\hline\hline
$e(\emptyset;x^2)$ & $e(1;x^2)xy$ & $e(1;x^2)y^2$\\
\hline
$e(\emptyset;xy)$ & $e(1,x^2)q^{-2}y^2$ & $e(1;xy)q^{-1}y^2$\\
\hline
$e(\emptyset;y^2)$ & $e(1;xy)q^{-4}y^2$ & $e(1;y^2)q^{-2}y^2$\\
\hline
\end{tabular}
\end{center}
where the basis elements in the leftmost column are taken to be the first factor in the multiplication. All other products are 0.
\end{example}

\bibliographystyle{amsplain}
\bibliography{biblio}

\end{document}